\newcommand{\beq}{\begin{equation}}
\newcommand{\eeq}{\end{equation}}
\newcommand{\bea}{\begin{aligned}}
\newcommand{\eea}{\end{aligned}}
\newcommand{\bdm}{\begin{displaymath}}
\newcommand{\edm}{\end{displaymath}}
\newcommand{\barr}{\begin{array}}
\newcommand{\earr}{\end{array}}
\newcommand{\ben}{\begin{enumerate}}
\newcommand{\een}{\end{enumerate}}
\newcommand{\bde}{\begin{description}}
\newcommand{\ede}{\end{description}}
\numberwithin{equation}{section}
\newtheorem{teor}{Theorem}[section]
\newtheorem{prop}[teor]{Proposition}
\newtheorem{lem}[teor]{Lemma}
\newtheorem{Def}[teor]{Definition}
\newtheorem{rem}[teor]{Remark}
\newcommand{\Z}{\mathbb{Z}}
\newcommand{\R}{\mathbb{R}}
\newcommand{\N}{\mathbb{N}}
\newcommand{\PP}{\mathbb{P}}
\newcommand{\E}{\mathbb{E}}
\newcommand{\C}{\mathbb{C}}
\newcommand{\D}{\mathbb{D}}
\newcommand{\defi}{\stackrel{\text{def}}{=}}
\newcommand{\al}{\alpha}
\newcommand{\de}{\delta}
\newcommand{\X}{\mathcal{X}}
\newcommand{\M}{\mathcal{M}}
\newcommand{\lb}{\left (}
\newcommand{\rb}{\right )}
\newcommand{\lbb}{\left [}
\newcommand{\rbb}{\right ]}
\newcommand{\labs}{\left |}
\newcommand{\rabs}{\right |}
\newcommand{\lbrb}[1]{\lb #1 \rb}
\newcommand{\labsrabs}[1]{\labs#1\rabs}
\newcommand{\langlerangle}[1]{\langle#1\rangle}
\newcommand{\Pbb}[1]{\Pb\lb #1\rb}
\newcommand{\Ebb}[1]{\Eb\lbb #1\rbb}
\newcommand{\Eb}{\mathbb{E}}
\newcommand{\Nb}{\mathbb{N}}
\newcommand{\Pb}{\mathbb{P}}
\newcommand{\Bc}{\mathcal{B}}
\newcommand{\Mcc}{\mathcal{M}}
\newcommand{\Xc}{\mathcal{X}}
\begin{document}

\title[Fluctuation limits of a locally regulated population]{Fluctuation Limits of a locally regulated population and generalized Langevin equations} 

\author{Mladen Savov and Shi-dong Wang}
\address{M. Savov\\Institute of Mathematics and Informatics\\Bulgarian Academy of Sciences\\15 Noemvri Str.\\ 1040 - Sofia, Bulgaria}
\email{m.savov@reading.ac.uk}

\address{ S.-D. Wang\\Corresponding author: Department of Statistics\\University of Oxford
\\1 South Parks Road
\\ Oxford, OX1 3TG, UK}
\email{shidong.wang@stats.ox.ac.uk}

\keywords{Interacting measure-valued processes; regulated population,  fluctuation limit; Langevin equation; stationary distribution.} 

\subjclass{60K35; 60F05; 92D25} 

\thanks{S.-D. Wang is supported by a EPSRC Grant EP/I01361X/1 at the University of Oxford and was supported by a 
Hausdorff Scholarship while at the University of Bonn.}

\begin{abstract}
We consider a locally regulated spatial population model introduced by Bolker and Pacala. Based on the deterministic approximation studied by Fournier and M\'el\'eard, we prove that the fluctuation theorem holds under some mild moment conditions. The limiting process is shown to be an infinite-dimensional Gaussian process solving a generalized Langevin equation. In particular, we further consider its properties in one-dimension case, which is characterized as a time-inhomogeneous Ornstein-Uhlenbeck process. 
\end{abstract}

\maketitle

\section{Introduction} \label{section one}

It is well known that branching processes have been widely used to model the evolution in biological populations. If, in addition, the individuals are assumed to follow some independent motions (like Brownian motion or random walks), the system can be approximated by the so-called Dawson-Watanabe superprocess (refer to \cite{Dawson_superprocess,Etheridge_introductory,Li_superprocess}). The most common feature of these processes is that branching and spatial motion are independent.

Since individuals can reproduce, mutate and die in varying rates according to their different spatial characteristics (phenotypes), one reasonable improvement we can make is to add spatial components to both branching and dispersal parameters. Nevertheless, the spatial-dependent components destroy the independence between branching and dispersal while bringing us abundant information from the phenotypic point of view, and even though, the model is still deficient: such as in the finite-dimensional branching process model, the populations either die out or escape to infinity, depending on the mean matrix of the offspring distribution. The model thus can not predict a non-trivial equilibrium which actually happens quite often in the biological world.
Bolker and Pacala \cite{Bolker_Pacala} propose a self-regulated model which attains the above two improved features. By employing the idea of the ordinary logistic growth equation, they introduce a competition term in the density-dependent populations, which can help the system to attain equilibria under specific conditions. However, the loss of branching property can also cause some new technical difficulties when we study some properties such as weak convergence from branching particle systems to a continuum limit.

Law and Dieckmann \cite{law_Dieckmann} study this model in parallel with Bolker and Pacala \cite{Bolker_Pacala}. We simply call it BPDL model. In recent years, this model has been extensively studied in papers such as  Etheridge \cite{Etheridge},  Fournier and M\'el\'eard \cite{Founier _Meleard}, Champagnat \cite{Champagnat}, Lambert \cite{Lambert}, Dawson and Greven \cite{DawsonGreven}. Etheridge \cite{Etheridge} studies two diffusion limits, one is a \emph{stepping stone version} of the BPDL model (interacting diffusions indexed by $\Z^d$) and another is a \emph{superprocess version} of it. In that paper, sufficient conditions are given for survival and local extinction. Fournier and M\'el\'eard \cite{Founier _Meleard} formulate a pathwise construction of the BPDL process in terms of Poisson point processes. Under the finiteness of third moment condition, they rigorously obtain a deterministic approximation (law of large numbers) of the BPDL processes. Our work is based on the formalization of Fournier and 
M\'el\'eard \cite{Founier _Meleard}. In the papers Champagnat \cite{Champagnat}, Champagnat and M\'el\'eard \cite{ChamMeleard}, Dawson and Greven \cite{DawsonGreven}, they investigate long term behaviour of respective populations by the method of multiple time scales analysis.

In this paper we aim to present and prove the fluctuation theorem in a general framework set by Fournier and M\'el\'eard \cite{Founier _Meleard}, which could be applied in the derivative models studied by the referred authors. As for a sequence of density-dependent population processes with only finite-many types, Kurtz \cite{Kurtz} proves its central limit theorem, which is characterized by some finite-dimensional diffusion process. As for infinite-dimensional population models, Gorostiza and Li \cite{Gorostiza_Li} prove the high-density fluctuations of a branching particle system with  immigration, where they use the classical Laplace transform method owing to the branching property. In our case, this approach doesn't work anymore due to the loss of branching property.

The remainder of the paper is structured as follows.
In Section \ref{section two}, we briefly describe the model and give some preliminary results. More precisely, we recall the law of large numbers of the BPDL processes proved by Fournier and M\'el\'eard \cite{Founier _Meleard}.
In Section \ref{section three}, we build the fluctuation theorem and prove the tightness and finite-dimensional convergence based on some moment estimates in subsequent sections.
In Section \ref{section four}, in order to better understand the limiting process, we show it to be the solution of an infinite-dimensional inhomogeneous Langevin equation, which can be viewed as evolving in a deterministic medium. 
In Section \ref{section five}, we consider a degenerate case, the one dimensional version of the fluctuation limit. A precise characterization of the fluctuation diffusion is given as a time-inhomogeneous Ornstein-Uhlenbeck process. We study its stationary distribution as well.


\section{Model}\label{section two}
\subsection{Notation and description of the process}
Following \cite{Bolker_Pacala}, we assume that the population at time $t$ is composed of a finite number $I(t)$ of individuals characterized by their phenotypic traits $x_1(t), \cdots, x_{I(t)}(t)$ taking values in a compact subset $\X$ of $\R^d$.\\
We denote by $\M_F(\X)$ the set of finite measures on $\X$ (including negative-valued measures). Let $\M_a(\X)\subset \M_F(\X)$ be the set of counting measures on $\X$:
\[
\M_a(\X)=\left\{\sum\limits_{i=1}^n\de_{x_i}: x_1,\cdots, x_n\in\X, n\in\N\right\}.
\]
Then, the population process can be represented as:
\[
\nu_t=\sum\limits_{i=1}^{I(t)}\delta_{X_i(t)}.
\]
Let $\Bc(\X)$ denote the totality of functions on $\X$ that are  bounded measurable. Let $C^{\infty}(\X)$ denote the space of infinitely differentiable functions on $\R^d$ with support contained in $\X$. Let $\mathcal{S}(\R^d)$ denote the Schwartz space of (infinitely differentiable, rapidly decreasing) testing functions on $\R^d$ whose topological dual space is $\mathcal{S}'(\R^d)$, and $\langle \cdot, \cdot\rangle$ the canonical bilinear form on $\mathcal{S}'(\R^d)\times\mathcal{S}(\R^d)$. When $\mu\in\mathcal{S}'(\R^d)$  is a (signed) measure, then $\langle\mu,\phi\rangle=\int\phi d\mu,\,\phi\in\mathcal{S}(\R^d)$. With a slight abuse of notation we denote by $\mathcal{S}'(\X)\subset \mathcal{S}'(\R^d)$ the subset of tempered distributions $\psi\in\mathcal{S}'(\R^d)$ which satisfy $\langlerangle{\phi,\psi}=0$, for any $\phi\in\mathcal{S}(\X^c)$, i.e. $Supp~ \phi\cap \X=\emptyset$. Note that $\Mcc_F\lbrb{\Xc}\subset \mathcal{S}'\lbrb{\Xc}$ which follows immediately from the definition of $\mathcal{S}\lbrb{\Xc}$.\\
Let's specify the population processes $(\nu_t^n)_{t>0}$ by introducing a sequence of biological parameters, for n$\in\N$:
\begin{itemize}
  \item $b_n(x)$ is the rate of birth from an individual with trait $x$.
  \item $d_n(x)$ is the rate of death of an individual with trait $x$ because of ``aging''.
  \item $\al_n(x,y)$ is the competition kernel felt by some individual with trait $x$ from another individual with trait $y$.
  \item $D_n(x,dz)$ is the children's dispersion law from the mother with trait $x$. Assume that
  \beq\label{dispersal kernel}
  D_n(x,dz)=m_n(x,z)dz.
  \eeq  
\end{itemize}
Here, $m_n(x,z)$ is the probability density for mutation variation, which satisfies
  \[
  \int_{z\in\R^d,x+z\in\X}m_n(x,z)dz=1.
  \]
Fournier and M\'el\'eard \cite{Founier _Meleard} have formulated a pathwise construction of the BPDL process $\{(\nu_t^n)_{t\geq0}; n\in\N\}$ in terms of Poisson random measures and justified its infinitesimal generator defined for any $\Phi\in \Bc(\M_a(\X))$:
\beq\label{generator BPDL}
\bea
L_0^n\Phi(\nu)=&\int_{\X}\nu(dx)\int_{\R^d}\Big(\Phi(\nu+\de_{x+z})-\Phi(\nu)\Big)b_n(x)D_n(x,dz)\\
             &+\int_{\X}\nu(dx)\Big(\Phi(\nu-\de_x)-\Phi(\mu)\Big)\left(d_n(x)+\int_{\X}\al_n(x,y)\nu(dy)\right).
\eea
\eeq
The first term is used to model birth events, while the second term which is nonlinear is interpreted as natural death and competing death.\\
Instead of studying the original BPDL processes defined by \eqref{generator BPDL}, our goal is to study the rescaled processes
 \beq\label{BPDL process scaled} X_t^n:=\frac{\nu_t^n}{n}, \qquad t\geq0
 \eeq
since it provides us a macroscopic approximation when we take the large population limits (we will see later, the initial population is proportional to $n$ in some sense).
The infinitesimal generator of the rescaled BPDL process has the form, for any $\Phi\in \Bc(\M_F(\X))$:
\beq\label{generator BPDL rescaled}
\bea
L^n\Phi(\mu)=&\int_{\X}n\mu(dx)\int_{\R^d}\Big(\Phi(\mu+\frac{\de_{x+z}}{n})-\Phi(\mu)\Big)b_n(x)D_n(x,dz)\\
             &+\int_{\X}n\mu(dx)\Big(\Phi(\mu-\frac{\de_x}{n})-\Phi(\mu)\Big)\left(d_n(x)+\int_{\X}\al_n(x,y)n\mu(dy)\right).
\eea
\eeq

\subsection{Preliminary results}
Let's denote by (A) the following assumptions:

 (A1) There exist $b(x),\, d(x),\,f_i(x), \,g_i(x)\in C^{\infty}(\Xc)\, (1\leq i\leq m^d)$ and $m(x,z)\in C^{\infty}(\X\times (\X-\X))$ such that, for $x,\,y\in\X, z\in\X-\X, \,n\in\N$,
 \[\bea & 0<b_n(x)\equiv b(x),\qquad 0<d_n(x)\equiv d(x), \qquad m_n(x,z)\equiv m(x,z),\\
        &0<\al_n(x,y)=\frac{\al(x,y)}{n}, \qquad\alpha(x,y)=\sum_{i=1}^{m^d}f_i(x)g_i(y).
   \eea
 \]

 (A2) $ b(x)-d(x)>0$.\\

 The first assumption implies that there exist constants $\bar{b}, \,\bar{d}, \,\bar{\al}$ such that $b(x)\leq\bar{b}, \,d(x)\leq\bar{d}, \,\al(x,y)\leq\bar{\al}$. The assumption on $\alpha(x,y)$ being the sum above is purely technical for the proof of Lemma \ref{lem:estimateInteraction} and the choice of $m^d$ irrelevant as it can be any positive integer. It seems that the technical restriction on $\alpha$ is very hard to remove at the level of CLT when there is an interaction. However, one easily sees that any smooth function $\alpha$ can be approximated in the supremum norm by expressions of $\alpha$ as in (A1). This then is perfectly suitable for any practical or numerical purpose.

 By neglecting the high order moment, Bolker and Pacala \cite{Bolker_Pacala} use the ``moment closure'' procedure to approximate the stochastic population processes. As we can see from the generator form \eqref{generator BPDL rescaled}, it should be enough to ``close'' the second order moment due to the quadratic nonlinear term. Actually the result proved by Fournier and M\'el\'eard still holds under a second moment condition $\sup\limits_{n\geq1}\E\langle X_0^n,1\rangle^2<\infty$. We only recall their result here without mentioning the detailed proof repeatedly.

 \begin{teor}[Convergence to a nonlinear integro-differential equation]\label{Theorem LLN}
Under the assumption (A1) consider the sequence of processes $(X_t^n)_{t\geq 0}$ defined in \eqref{BPDL process scaled}. Suppose that $(X_0^n)$ converges in law to some deterministic finite measure $X_0\in\M_F(\X)$ as $n\to\infty$ and satisfies $\sup\limits_{n\geq1}\E\langle X_0^n,1\rangle^2<\infty$.

Then the sequence of processes $(X_t^n)_{t\geq 0}$ converges in law as $n\to\infty$, on $\D([0,\infty),\M_F(\X))$, to a deterministic measure-valued process $(X_t)_{t\geq0}\in\C([0,\infty),\M_F(\X))$, where $(X_t)_{t\geq0}$ is the unique solution satisfying
\beq\label{LLN limit}
\bea
\langle X_t,\phi\rangle =&\langle X_0,\phi\rangle+\int_0^tds\int_{\X}X_s(dx)b(x)\int_{\R^d}\phi(x+z)D(x,dz)\\
\qquad &-\int_0^tds\int_{\X}X_s(dx)\phi(x)\Big(d(x)+\int_{\X}\al(x,y)X_s(dy)\Big).
\eea
\eeq
for any $\phi\in\Bc(\Xc)$ and
\beq\label{LLN Limit_initial}
\sup\limits_{t\in[0,T]}\langle X_t,1\rangle<\infty.
\eeq

 \end{teor}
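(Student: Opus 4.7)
The plan is to follow the standard martingale/tightness scheme for measure-valued processes, using the generator \eqref{generator BPDL rescaled} as the starting point. First I would apply Dynkin's formula to $\Phi(\mu)=\langlerangle{\mu,\phi}$ for $\phi\in\Bc(\Xc)$, obtaining a semimartingale decomposition
\[
\langle X_t^n,\phi\rangle=\langle X_0^n,\phi\rangle+\int_0^t A^n_s\phi\,ds+M_t^{n,\phi},
\]
where $A^n_s\phi$ is the drift read off from $L^n$ and $M_t^{n,\phi}$ is a purely discontinuous martingale. A direct computation, using the scaling $\al_n=\al/n$ and jumps of size $1/n$, shows that $A^n_s\phi$ already coincides with the drift appearing in \eqref{LLN limit} (no approximation is needed because of the choice of $\al_n$), and that the quadratic variation $\langle M^{n,\phi}\rangle_t$ is $O(1/n)$ times an expression bounded by constants and $\langle X_s^n,1\rangle^2$.

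Next I would establish the a priori moment estimate $\sup_{n\ge 1}\E\lbbrbb{\sup_{t\le T}\langle X_t^n,1\rangle^2}<\infty$. Taking $\phi\equiv 1$ kills the dispersion term, and the drift is bounded by $\bar b\langle X_s^n,1\rangle$; Doob's inequality applied to $M^{n,1}$ together with the bound on $\langle M^{n,1}\rangle_t$ and Gronwall's lemma, starting from the hypothesis $\sup_n\E\langle X_0^n,1\rangle^2<\infty$, yields the claim. This bound controls every term in the generator uniformly in $n$.

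For tightness on $\D([0,\infty),\Mcc_F(\Xc))$ I would use the criterion of Roelly-Coppoletta: it suffices to prove tightness of $(\langle X_t^n,\phi\rangle)_t$ in $\D([0,\infty),\R)$ for each $\phi$ in a countable dense subset of $C(\Xc)$, which follows from Aldous' criterion applied to the semimartingale decomposition above (the finite variation part is Lipschitz-continuous in time with bounded coefficients, and the martingale part vanishes in $L^2$). To identify any limit point $X$, one passes to the limit in the functional equation
\[
F_t^\phi(X^n)\defi\langle X_t^n,\phi\rangle-\langle X_0^n,\phi\rangle-\int_0^t A^n_s\phi\,ds=M_t^{n,\phi},
\]
using continuity of the map $\mu\mapsto F_t^\phi(\mu)$ on paths that are a.s.\ continuous for the limit (the limit has no jumps because jumps of $X^n$ are of size $1/n$), and the vanishing of the martingale part. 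This produces \eqref{LLN limit} as an equation satisfied by $X$, while \eqref{LLN Limit_initial} is inherited from the uniform moment bound via Fatou.

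The main obstacle, and the step where the loss of branching property bites, is uniqueness of \eqref{LLN limit}, which is needed to conclude that the whole sequence converges. I would prove it by a Gronwall argument in total variation: given two solutions $X$, $\widetilde X$ with the same initial condition, test against $\phi\equiv 1$ to bound $\sup_{s\le T}\langle X_s+\widetilde X_s,1\rangle$, then take $\phi$ ranging in the unit ball of $\Bc(\Xc)$ and use assumption (A1) — boundedness of $b,d,\al$ and smoothness of $m$ — together with the quadratic nature of the competition term to derive
\[
\|X_t-\widetilde X_t\|_{TV}\le C\int_0^t\|X_s-\widetilde X_s\|_{TV}\,ds,
\]
whence Gronwall gives $X=\widetilde X$. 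Once uniqueness is in hand, tightness plus identification of limit points yields convergence in law of the full sequence.
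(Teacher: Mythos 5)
The paper does not actually prove this theorem: it is quoted from Fournier and M\'el\'eard (2004), and your outline (Dynkin/semimartingale decomposition with $O(1/n)$ quadratic variation, Gronwall moment bound for $\langle X^n,1\rangle$, Roelly--Aldous tightness, identification of limit points through the functional $F_t^\phi$, and total-variation Gronwall for uniqueness) is essentially the scheme of that reference. The one point worth flagging is that under only a second-moment bound the quadratic competition term makes $F_t^\phi(X^n)$ not obviously uniformly integrable, so the identification step should pass to the limit in $\E\bigl[|F_t^\phi(X^n)|\wedge 1\bigr]$ (using continuity of $F_t^\phi$ at the continuous limit paths) rather than in the raw expectation; this is a standard fix and does not change your plan.
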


Finally, it comes to a natural question: how does $(X_t^n)_{t\geq 0}$ fluctuate around the macroscopic limit $(X_t)_{t\geq0}$ given above? A natural candidate to be investigated could be the centralized sequence of processes:
\beq\label{BPDL process rescaled}
Y_t^n:=\frac{\nu_t^n-nX_t}{\sqrt{n}}=\sqrt{n}(X_t^n-X_t).
\eeq
In the following proposition, we will give some martingale properties of the processes $(Y_t^n)_{t\geq0}$, which will play a key role in the proof of the main theorem.

\begin{prop}\label{Martingale BPDL Rescaled}
Admit the same assumptions as in Theorem \ref{Theorem LLN}. For fixed $n\in\N$ and $\phi\in\Bc(\Xc)$, the process
\beq\label{martingale BPDL rescaled_Y^n}
\bea
M_t^n(\phi):=&\langle Y_t^n,\phi\rangle-\langle Y_0^n,\phi\rangle-\int_0^tds\int_{\X}Y_s^n(dx)b(x)\int_{\R^d}\phi(x+z)D(x,dz)\\
&+\int_0^tds\int_{\X}\phi(x)d(x)Y_s^n(dx)\\
&+\sqrt{n}\int_0^tds\int_{\X}X_s^n(dx)\phi(x)\int_{\X}\al(x,y)X_s^n(dy)\\
&-\sqrt{n}\int_0^tds\int_{\X}X_s(dx)\phi(x)\int_{\X}\al(x,y)X_s(dy)
\eea
\eeq
is a c\`adl\`ag square integrable martingale with quadratic variation
\beq\label{martingale BPDL rescaled_variation}
\bea
\langle M_{\cdot}^n(\phi)\rangle_t&=\int_0^tds\int_{\X}X_s^n(dx)b(x)\int_{\R^d}\phi^2(x+z)D(x,dz)\\
&\qquad+\int_0^tds\int_{\X}X_s^n(dx)\phi^2(x)\Big(d(x)+\int_{\X}\al(x,y)X_s^n(dy)\Big).
\eea
\eeq
\end{prop}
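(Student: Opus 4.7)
The plan is to obtain $M_t^n(\phi)$ as the rescaled version of the standard martingale associated with $X_t^n$ through the generator $L^n$, and then read off the quadratic variation from the jump structure. First, I would apply $L^n$ from \eqref{generator BPDL rescaled} to the linear functional $\Phi(\mu)=\langlerangle{\mu,\phi}$. The per-jump increments are $\phi(x+z)/n$ at births and $-\phi(x)/n$ at deaths, while the rates carry a compensating factor $n$ (since the driving intensity involves $n\mu(dx)$), so that $L^n\langlerangle{\cdot,\phi}(\mu)$ equals exactly the integrand appearing on the right-hand side of \eqref{LLN limit} evaluated at $\mu$. By Dynkin's formula, or equivalently by the Poisson point process construction of \cite{Founier _Meleard}, the process $\widetilde M^n_t(\phi):=\langlerangle{X^n_t,\phi}-\langlerangle{X^n_0,\phi}-\int_0^tL^n\langlerangle{\cdot,\phi}(X^n_s)\dd s$ is then a local martingale for each fixed $n$.

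Next I would multiply $\widetilde M^n_t(\phi)$ by $\sqrt n$ and subtract $\sqrt n$ times the deterministic identity \eqref{LLN limit} satisfied by $X_t$. Because of the linearity of the birth and natural-death terms, those integrals telescope into the $Y^n_s$-integrals displayed in \eqref{martingale BPDL rescaled_Y^n} via $Y^n=\sqrt n(X^n-X)$. The quadratic competition term is not linear in the measure and so survives as the last two lines of \eqref{martingale BPDL rescaled_Y^n}. This shows the identity $M^n_t(\phi)=\sqrt n\,\widetilde M^n_t(\phi)$, from which the c\`adl\`ag martingale property follows at once.

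For the predictable quadratic variation I would use the standard jump-process formula $\langlerangle{\widetilde M^n(\phi)}_t=\int_0^t\big(L^n\Phi^2-2\Phi L^n\Phi\big)(X^n_s)\dd s$ for $\Phi(\mu)=\langlerangle{\mu,\phi}$. The integrand reduces to a sum of squared jump sizes $\phi^2(x+z)/n^2$ and $\phi^2(x)/n^2$, each weighted by its respective intensity (proportional to $n$). Hence $\langlerangle{\widetilde M^n(\phi)}_t$ is of order $1/n$, and multiplication by $n$ coming from $M^n=\sqrt n\,\widetilde M^n$ delivers precisely \eqref{martingale BPDL rescaled_variation}. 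Equivalently one may read off the same expression from the jump-measure representation of $Y^n$, whose jumps are $\pm\phi(\cdot)/\sqrt n$ with the intensities just described.

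The main obstacle is upgrading the local martingale to a genuinely square-integrable one, which amounts to showing $\E\langlerangle{M^n(\phi)}_t<\infty$ for every fixed $n$ and $t$. Since the competition contribution in the quadratic variation is quadratic in the measure, this reduces to a uniform-in-$t$ bound on $\E \sup_{s\leq t}\langlerangle{X^n_s,1}^2$. Under the boundedness of $b,d,\alpha$ from (A1) and the standing hypothesis $\sup_n\E\langlerangle{X^n_0,1}^2<\infty$, such a bound is obtained by a routine Gronwall argument of the same flavor as in the proof of Theorem \ref{Theorem LLN} in \cite{Founier _Meleard}, applied first to $\langlerangle{X^n_t,1}$ and then to its square. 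Together with the boundedness of $\phi$, this yields $\E\langlerangle{M^n(\phi)}_t<\infty$ and completes the proof.
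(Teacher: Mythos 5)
Your proposal is correct and follows essentially the same route as the paper: apply the generator \eqref{generator BPDL rescaled} to $\Phi(\mu)=\langle\mu,\phi\rangle$ to get the martingale for $X^n$, subtract the deterministic equation \eqref{LLN limit} and multiply by $\sqrt n$ to obtain $M^n_t(\phi)$, and identify the bracket via the carr\'e du champ $L^n\Phi^2-2\Phi L^n\Phi$ --- which is exactly what the paper does by comparing the It\^o decomposition of $\langle X^n_t,\phi\rangle^2$ with the Dynkin decomposition for $\Phi(\mu)=\langle\mu,\phi\rangle^2$. Your extra care in upgrading the local martingale to a square integrable one via a Gronwall bound on $\E\sup_{s\le t}\langle X^n_s,1\rangle^2$ is a welcome addition; the paper simply inherits square integrability from the Fournier--M\'el\'eard construction.
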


\begin{proof} Recall the generator \eqref{generator BPDL rescaled}, for bounded measurable functional $\Phi$ on $\M_F(\X)$, the process
\[
\Phi(X_t^n)-\Phi(X_0^n)-\int_0^t L^n \Phi(X_s^n)ds
\]
is a c\`adl\`ag square integrable martingale. If we take $\Phi(\mu)=\langle \mu, \phi\rangle,\,\forall\phi\in \Bc(\X)$, one obtains that
\beq\label{martingale for X}
\bea
N_t^n(\phi):=&\langle X_t^n,\phi\rangle-\langle X_0^n,\phi\rangle-\int_0^tds\int_{\X}X_s^n(dx)b(x)\int_{\R^d}\phi(x+z)D(x,dz)\\
&+\int_0^tds\int_{\X}X_s^n(dx)\phi(x)\Big(d(x)+\int_{\X}\al(x,y)X_s^n(dy)\Big)
\eea
\eeq
is a c$\grave{a}$dl$\grave{a}$g martingale.
By applying It$\hat{o}$'s formula to $\langle X_t^n,\phi\rangle^2$, we have
\[
\bea\langle X_t^n,\phi\rangle^2&-\langle X_0^n,\phi\rangle^2-2\int_0^tds\langle X_s^n,\phi\rangle\int_{\X}X_s^n(dx)\Big\{b(x)\int_{\R^d}\phi(x+z)D(x,dz)\\
&-\phi(x)\Big(d(x)+\int_{\X}\al(x,y)X_s^n(dy)\Big)\Big\} -\langle N_\cdot^n(\phi)\rangle_t
\eea
\]
is a martingale.
On the other hand, if we take $\Phi(\mu)=\langle \mu, \phi\rangle^2$,  it follows that
\beq\label{martingale for X^2}
\bea
\langle X_t^n,\phi\rangle^2&-\langle X_0^n,\phi\rangle^2-\int_0^t L^n\Phi(X_s^n)ds\\
=&\langle X_t^n,\phi\rangle^2-\langle X_0^n,\phi\rangle^2-2\int_0^tds\langle X_s^n,\phi\rangle\int_{\X}X_s^n(dx)\Big\{b(x)\int_{\R^d}\phi(x+z)D(x,dz)\\
&-\phi(x)\Big(d(x)+\int_{\X}\al(x,y)X_s^n(dy)\Big)\Big\} \\
&-\frac{1}{n}\int_0^tds\int_{\X}X_s^n(dx)b(x)\int_{\R^d}\phi^2(x+z)D(x,dz)\\
&-\frac{1}{n}\int_0^tds\int_{\X}X_s^n(dx)\phi^2(x)\Big(d(x)+\int_{\X}\al(x,y)X_s^n(dy)\Big)
\eea
\eeq
is a martingale.
By comparing the two decompositions of the semimartingale $\langle X_t^n,\phi\rangle^2$ above, one obtains that
\beq\label{quadratic for X}
\bea
\langle N_\cdot^n(\phi)\rangle_t
&=\frac{1}{n}\int_0^tds\int_{\X}X_s^n(dx)b(x)\int_{\R^d}\phi^2(x+z)D(x,dz)\\
&+\frac{1}{n}\int_0^tds\int_{\X}X_s^n(dx)\phi^2(x)\Big(d(x)+\int_{\X}\al(x,y)X_s^n(dy)\Big). \eea
\eeq
Owing to \eqref{martingale for X} and \eqref{LLN limit}, do the operation $\big(\langle X_t^n,\phi\rangle-\langle X_t,\phi\rangle\big)$ and let $M_t^n(\phi):=\sqrt{n}N_t^n(\phi)$, to conclude the proof by the definition of $(Y_t^n)$ in $\eqref{BPDL process rescaled}$.
\end{proof}

\section{Fluctuation theorem}\label{section three}
In this section, our aim is to study the asymptotic behavior of $(Y_t^n)_{t\geq0}$ as $n\to\infty$. The following theorem, the main result of the paper, shows that $(Y_t^n)_{t\geq0}$ indeed converges to the unique solution of a martingale problem.

In the following sections, we will use the notation given in Section \ref{section two} without declaration. We always assume that assumption (A1) holds.

\begin{teor}\label{Theorem CLT}
Admit assumption (A1) and suppose that there exists a deterministic finite nonnegative measure $X_0\in\M_F(\X)$ such that $Y_0^n=\sqrt{n}(X_0^n-X_0)$ satisfies for some $\delta>0$
\beq\label{moment condition}
\sup\limits_{n\geq1}\lbrb{\sup\limits_{\phi\in\Bc(\Xc); {\|\phi\|}_{\infty}\leq 1}\E\labsrabs{\langle Y^n_0,\phi\rangle}^{4+\delta}}<\infty.
\eeq
Suppose that $(Y_0^n)$  converges in law to a finite (maybe random) measure $\gamma$ as $n\to\infty$.
Then, the process $(Y_t^n)_{t\geq0}$ converges in law as $n\to\infty$ on $\D([0,\infty),\mathcal{S}'(\X))$ to a process $(Y_t)_{t\geq0}\in\C([0,+\infty),\mathcal{S}'(\X))$
 where $(Y_t)_{t\geq0}$ is the unique solution satisfying
 \beq\label{CLT limit}
 \bea
 \langle Y_t,\phi\rangle=\langle \gamma,\phi\rangle&+\int_0^tds\,\Big\langle Y_s,\, b(\cdot)\int_{\R^d}\phi(\cdot+z)D(\cdot,dz)\Big\rangle\\
 &-\int_0^t ds\, \Big\langle Y_s,\, d(\cdot)\phi(\cdot)\Big\rangle\\
 &-\int_0^t ds\,\Big\langle Y_s,\, \int_{\X}\al(x,\cdot)\phi(x)X_s(dx)\Big\rangle\\
 &-\int_0^t ds\, \Big\langle Y_s, \, \phi(\cdot)\int_{\X}\al(\cdot,y)X_s(dy)\Big\rangle\\
 &+M_t(\phi)
 \eea
 \eeq
 for any $\phi\in\mathcal{S}(\R^d)$.
Here, $(X_t)_{t\geq0}$ is the solution defined by the deterministic nonlinear equation $\eqref{LLN limit}$, while $M_t(\phi)$ is a continuous martingale with quadratic variation
\beq\label{CLT variation}
\bea
\langle M_{\cdot}(\phi)\rangle_t=&\int_o^tds\int_{\X}X_s(dx)b(x)\int_{\R^d}\phi^2(x+z)D(x,dz)\\
                        &+\int_0^tds\int_{\X}X_s(dx)\phi^2(x)\Big(d(x)+\int_{\X}\al(x,y)X_s(dy)\Big).
\eea
\eeq
\end{teor}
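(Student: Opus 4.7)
The plan is to follow the standard three-step program for fluctuation limits of interacting particle systems: rewrite the pre-limit equation of Proposition \ref{Martingale BPDL Rescaled} so that its drift becomes linear in $Y^n$ up to an asymptotically negligible remainder, prove tightness of $(Y^n)$ in $\D([0,\infty),\mathcal{S}'(\X))$, and then identify every subsequential limit as the unique solution of \eqref{CLT limit}--\eqref{CLT variation}. The key algebraic reduction is the polarisation
\[
X^n_s(dx)X^n_s(dy)-X_s(dx)X_s(dy)=\tfrac{1}{\sqrt{n}}Y^n_s(dx)X_s(dy)+\tfrac{1}{\sqrt{n}}X_s(dx)Y^n_s(dy)+\tfrac{1}{n}Y^n_s(dx)Y^n_s(dy),
\]
which, after multiplication by $\sqrt{n}$, turns the two nonlinear lines of \eqref{martingale BPDL rescaled_Y^n} into exactly the two linear interaction terms that appear in \eqref{CLT limit} plus a residual of order $1/\sqrt{n}$. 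The tensor-product form of $\alpha$ imposed in (A1) is precisely what will allow quantities of the type $\langle Y^n_s,f\rangle\langle Y^n_s,g\rangle$ to be handled as finite sums of products of scalar functionals, keeping the residual tractable (this is the role played by the forthcoming Lemma \ref{lem:estimateInteraction}).

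For tightness I would invoke Mitoma's criterion and reduce to tightness of the real-valued processes $(\langle Y^n,\phi\rangle)$ in $\D([0,\infty),\R)$ for each $\phi\in\mathcal{S}(\R^d)$, and then apply Aldous's condition. Marginal bounds follow from a uniform second-moment estimate on $\langle Y^n_t,\phi\rangle$ obtained by Gr\"{o}nwall applied to the linearised form of \eqref{martingale BPDL rescaled_Y^n}, together with a Burkholder--Davis--Gundy bound on $M^n_t(\phi)$ via the explicit quadratic variation \eqref{martingale BPDL rescaled_variation}; the latter is controlled because $\langle X^n_s,1\rangle$ has moments uniformly bounded in $n$, a consequence of (A1) and the moment assumption on $Y^n_0$. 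The $4+\delta$ moment hypothesis \eqref{moment condition} enters here to upgrade the second-moment estimate to a uniform $(4+\delta)$-moment bound, leaving room to dominate the $1/\sqrt{n}$ residual uniformly in $n$. Aldous's oscillation control on stopping times $\tau_n\leq T$ and $h\leq\delta$ then reduces to a direct estimate of the finite-variation part together with $\E\langle M^n(\phi)\rangle_{\tau_n+h}-\E\langle M^n(\phi)\rangle_{\tau_n}\leq C\delta$.

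The identification step proceeds by extracting a subsequence along which $Y^n\Rightarrow Y$, with the limit living in $\C([0,\infty),\mathcal{S}'(\X))$ since the jumps of $Y^n$ are of size $1/\sqrt{n}$. Passing to the limit in \eqref{martingale BPDL rescaled_Y^n} requires three ingredients: (a) convergence of each linear drift term, which follows because the integrating kernels are smooth by (A1) and $X_s$ is deterministic and continuous in $s$ by Theorem \ref{Theorem LLN}; (b) $L^1$-vanishing of the quadratic residual $\tfrac{1}{\sqrt{n}}\int Y^n_s(dx)\phi(x)\int\alpha(x,y)Y^n_s(dy)$, which is where the tensor decomposition of $\alpha$ together with the uniform $(4+\delta)$-moment control are combined; and (c) convergence of $M^n_t(\phi)$ to a continuous square-integrable martingale $M_t(\phi)$ with the claimed quadratic variation \eqref{CLT variation}, obtained by substituting $X^n_s\Rightarrow X_s$ in \eqref{martingale BPDL rescaled_variation}. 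Uniqueness of \eqref{CLT limit} is a Gr\"{o}nwall argument in a suitable dual norm on $\mathcal{S}'(\X)$: the equation is linear in $Y$ with bounded smooth coefficients, so the difference of any two solutions sharing the same noise and initial datum satisfies a linear equation forcing it to vanish. The main obstacle I anticipate is step (b), i.e.\ making the quadratic-in-$Y^n$ remainder asymptotically negligible; this is precisely what dictates both the tensor-sum structure imposed on $\alpha$ in (A1) and the strength of the initial moment assumption \eqref{moment condition}.
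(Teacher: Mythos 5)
Your skeleton (linearise the drift via the polarisation identity, Mitoma plus Aldous--Rebolledo for tightness, identify subsequential limits through the martingale problem, uniqueness from linearity) coincides with the paper's. But there is a genuine gap in the moment-estimate step, and it is exactly the step on which the paper spends most of its effort. When you run Gr\"onwall on $G^n(T)=\sup_{\|\phi\|_\infty\le1}\E[\sup_{t\le T}\langle Y^n_t,\phi\rangle^2]$, the linearised interaction drift contributes
\[
\E\Big[\Big(\int_\X Y^n_s(dx)\phi(x)\int_\X\alpha(x,y)X^n_s(dy)\Big)^2\Big],
\]
in which the ``coefficient'' $\int_\X\alpha(x,y)X^n_s(dy)$ is random and unbounded. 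This quantity is of the type $\E[\langle Y^n_s,\phi\rangle^2\langle X^n_s,1\rangle^2]$ and cannot be absorbed into $G^n(s)$: Cauchy--Schwarz sends you to fourth moments of $\langle Y^n_s,\phi\rangle$, and an attempted fourth-moment Gr\"onwall hierarchy does not close either, since the same interaction term then generates sixth-moment cross quantities. So your plan to ``upgrade the second-moment estimate to a uniform $(4+\delta)$-moment bound'' on $\langle Y^n_t,\phi\rangle$ for positive times is not available as stated, and you have also misplaced where hypothesis \eqref{moment condition} is actually consumed: the $1/\sqrt n$ quadratic residual in your step (b) is disposed of with the second-moment bound and the tensor form of $\alpha$ alone, and is not the main obstacle.

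The paper's resolution is a truncation-plus-domination argument that your proposal has no substitute for. One splits on $\{\langle X^n_s,1\rangle\le A\}$, where the offending term is bounded by $A^2G^n(s)$ and enters Gr\"onwall; on the complement one dominates $\langle X^n_s,1\rangle$ pathwise by a rescaled pure birth process of rate $\bar b$ and proves an exponential tail bound (Lemma \ref{lem:largedeviation}), then integrates $\Pb\lbrb{\langle X^n_s,1\rangle>y}y^3\,dy$ --- this is where the $(4+\delta)$ initial moment is used, to offset the explicit factor $n$ in $n\,\E|\langle X^n_0-X_0,1\rangle|^{4+\delta}$ and make the initial tail integrable against $y^3$ (Lemma \ref{lem:finiteness}); the tensor decomposition of $\alpha$ in (A1) reduces the interaction to finitely many scalar pairings so that the supremum over $\|\phi\|_\infty\le1$ survives the estimate (Lemma \ref{lem:estimateInteraction}). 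Without some version of this, your Gr\"onwall loop does not close, and the tightness and identification steps you build on top of it are unsupported. A minor further divergence: for uniqueness the paper does not run Gr\"onwall in a dual norm but identifies the limit with the unique mild solution of a generalized Langevin equation via the evolution system of Kallianpur--Perez-Abreu; both routes are viable since the limiting equation is linear with bounded coefficients.
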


\begin{rem}
  The argument above makes essentially use of the initial moment \eqref{moment condition} and the initial convergence condition. This condition fulfils the assumptions needed in Theorem \ref{Theorem LLN}. Therefore, the law of large number limit $(X_t)_{t\geq0}$ is well defined (see Lemma \ref{Moment estimate X_lemma}).
\end{rem}
\begin{rem}
To avoid confusion, let us point out that we first prove that $Y$ is an $\mathcal{S}'(\R^d)$-valued process, and $\Mcc_F\lbrb{\Xc}\subset\mathcal{S}'(\R^d)$. Subsequently, we show that $Y$ is an $\mathcal{S}'(\Xc)$-valued process. Therefore, there is no special argument related to the structural properties of $\mathcal{S}'(\Xc)$. Note that M\'el\'eard \cite{Meleard1997} studies the convergence of fluctuations associated with Boltzmann equations in a weighted Sobolev space with a ``Sobolev embedding'' technique. 
\end{rem}

Proving the theorem is the content of the following sections.


\subsection{Moment estimates and tightness}
The tightness criterion is established for semimartingales based on the moment estimates (see \cite{Etheridge_introductory}). Our first two lemmas give the uniform second order moment estimates for a sequence of processes over finite time intervals.

\begin{lem}\label{Moment estimate X_lemma}
 Suppose that a sequence of random variables $(Y_0^n)$ in $\M_F(\X)$ satisfies the same condition as in Theorem \ref{Theorem CLT}.
Then,  $X_0^n\stackrel{\text{in law}}{\longrightarrow} X_0$, as $n\to\infty$, and
\beq\label{moment condition Y_O}
\sup\limits_{n\geq1}\lbrb{\sup\limits_{\phi\in\Bc(\Xc); {\|\phi\|}_{\infty}\leq 1}\E\langle Y^n_0,\phi\rangle^2}<\infty
\eeq
 and
\beq\label{moment estimate X_0}
\sup\limits_{n\geq1}\E\langle X_0^n, 1\rangle^2<\infty.
\eeq
Hence, Theorem \ref{Theorem LLN} holds.

In particular, for any $T<\infty$, there exists a constant $C_T^{(1)}>0$ such that
\beq\label{moment estimate X}
\sup\limits_{n\geq1}\E[\sup\limits_{0\leq t\leq T}\langle X_t^n,1\rangle^2]\leq C_T^{(1)}.
\eeq
\end{lem}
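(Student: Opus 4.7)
The plan is to handle the three initial claims as soft consequences of the hypotheses of Theorem~\ref{Theorem CLT}, and then to obtain \eqref{moment estimate X} via a Doob--Gronwall argument applied to the martingale \eqref{martingale for X} tested against $\phi\equiv 1$.

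First, \eqref{moment condition Y_O} follows from \eqref{moment condition} by Jensen's inequality, since $4+\delta>2$. Using the identity $X_0^n = X_0 + Y_0^n/\sqrt{n}$ read off from \eqref{BPDL process rescaled} at $t=0$, the second-moment bound \eqref{moment condition Y_O} applied with $\phi\equiv 1$ gives $\E\langle Y_0^n,1\rangle^2/n\to 0$, hence $Y_0^n/\sqrt{n}\to 0$ in probability; since $X_0$ is deterministic, this yields $X_0^n\to X_0$ in law. Squaring the inequality $\langle X_0^n,1\rangle \le \langle X_0,1\rangle + |\langle Y_0^n,1\rangle|/\sqrt{n}$ and taking expectations produces \eqref{moment estimate X_0}, so the hypotheses of Theorem~\ref{Theorem LLN} are met and the LLN conclusion holds.

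For the uniform-in-time bound \eqref{moment estimate X}, the main step is to test \eqref{martingale for X} against $\phi\equiv 1$. Since $\int D(x,dz)\le 1$, the positive drift is dominated by $\bar{b}\,\langle X_s^n,1\rangle$; the death-plus-competition drift is non-positive and can be discarded to obtain the pathwise bound
\[
\langle X_t^n,1\rangle \le \langle X_0^n,1\rangle + \bar{b}\int_0^t \langle X_s^n,1\rangle ds + N_t^n(1).
\]
Squaring, using $(a+b+c)^2\le 3(a^2+b^2+c^2)$ and Cauchy--Schwarz on the time integral, taking $\sup_{s\le t}$ then expectation, and invoking Doob's $L^2$ inequality combined with \eqref{quadratic for X} (with $\phi\equiv 1$ and the bounds $b\le\bar{b}$, $d\le\bar{d}$, $\al\le\bar{\al}$) yield an estimate of the form $u_n(t)\le C_T + C_T'\int_0^t u_n(s)\,ds$, where $u_n(t):=\E\sup_{s\le t}\langle X_s^n,1\rangle^2$ and $C_T,C_T'$ are independent of $n$ thanks to \eqref{moment estimate X_0}. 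Gronwall's lemma then closes the loop.

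The step I anticipate as the main obstacle is not the algebra above but the justification that $u_n(t)$ is \emph{a priori} finite, which Gronwall's lemma requires. The standard remedy is localization via the stopping times $\tau_M^n := \inf\{t\ge 0 : \langle X_t^n,1\rangle > M\}$: run the argument up to $t\wedge\tau_M^n$, where the truncated analogue of $u_n$ is bounded by a constant depending on $M$, obtain a Gronwall bound independent of $M$, and then let $M\to\infty$ with Fatou's lemma. A closely related technical point is that the quadratic-in-mass term $(\bar{\al}/n)\int_0^t \langle X_s^n,1\rangle^2\,ds$ entering the quadratic variation of $N^n(1)$ is uniformly controlled only because of the overall $1/n$ prefactor in \eqref{quadratic for X}, which ultimately descends from the scaling $\al_n = \al/n$ in (A1); this is precisely the feature that keeps the estimate uniform in $n$.
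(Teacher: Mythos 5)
Your argument is correct and follows essentially the same route as the paper: the first three claims are obtained exactly as in the paper (H\"older/Jensen for \eqref{moment condition Y_O}, the decomposition $X_0^n=X_0+Y_0^n/\sqrt{n}$ for the convergence in law and for \eqref{moment estimate X_0}), and for \eqref{moment estimate X} the paper simply invokes \cite[Theorem 3.1]{Founier _Meleard} together with Gronwall, which is precisely the Doob--Gronwall-with-localization argument you write out in full. Your explicit treatment of the a priori finiteness of $u_n(t)$ via stopping times, and your remark that the $1/n$ prefactor in \eqref{quadratic for X} is what keeps the quadratic-variation term uniform in $n$, are accurate details that the paper delegates to the citation.
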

\begin{proof} In fact, the convergence from $(X_0^n)$ to $X_0$ in law can be implied by the convergence from $(Y_0^n)$ to $\gamma$.
By H\"{o}lder inequality, we easily get that
\beq
\sup\limits_{n\geq1}\lbrb{\sup\limits_{\phi\in\Bc(\Xc); {\|\phi\|}_{\infty}\leq 1}\E\langle Y^n_0,\phi\rangle^2}\leq \sup\limits_{n\geq1}\lbrb{\sup\limits_{\phi\in\Bc(\Xc); {\|\phi\|}_{\infty}\leq 1}\E\labsrabs{\langle Y^n_0,\phi\rangle}^{4+\delta}}^{\frac{2}{4+\delta}}<\infty.
\eeq

On the other hand,
  because of the definition of $(Y_0^n)$ as in \eqref{BPDL process rescaled}, we obtain that
  \[
  \sup\limits_{n\geq1}\E\langle X_0^n, 1\rangle^2\leq 2\langle X_0, 1\rangle^2+ 2\sup\limits_{n\geq1}\frac{1}{n}\E\langle Y_0^n, 1\rangle^2<\infty.
  \]
  Now the proof of the moment estimate \eqref{moment estimate X} follows immediately from \cite[Theorem 3.1]{Founier _Meleard} by applying the Gronwall's lemma.
\end{proof}
\begin{lem}\label{Moment estimate Y_lemma}
 Suppose that a sequence of random variables $Y_0^n\in\M_F(\X)$ satisfies \eqref{moment condition}.
Then, for any $T<\infty$, there exists a constant  $C_T^{(2)}>0$ such that
\beq\label{moment estimate Y}
\sup\limits_{n\geq1}\lbrb{\sup\limits_{\phi\in\Bc(\Xc); {\|\phi\|}_{\infty}\leq 1}\E[\sup\limits_{0\leq t\leq T}\langle Y_t^n,\phi\rangle^2]}\leq C_T^{(2)}.
\eeq

\end{lem}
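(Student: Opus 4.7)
The plan is as follows. Fix $\phi \in \Bc(\Xc)$ with $\|\phi\|_\infty \leq 1$ and introduce
$u(t) := \sup_{\|\phi'\|_\infty \leq 1}\E[\sup_{r\leq t}\langle Y_r^n, \phi'\rangle^2]$ (suppressing the dependence on $n$). I would apply the martingale decomposition of Proposition \ref{Martingale BPDL Rescaled} to write $\langle Y_t^n, \phi\rangle$ as the sum of $\langle Y_0^n, \phi\rangle$, two linear drift integrals $\int_0^t \langle Y_s^n, \psi_j\rangle ds$ (with $\|\psi_1\|_\infty \leq \bar{b}$ and $\|\psi_2\|_\infty \leq \bar{d}$), the $\sqrt{n}$-scaled interaction difference, and the martingale $M_t^n(\phi)$; the aim is then to bound each piece in terms of $\int_0^t u(s)\,ds$ plus constants, and close via Gronwall's lemma.

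The first step is to decompose the interaction difference via the bilinear identity
\[
X_s^n \otimes X_s^n - X_s \otimes X_s = (X_s^n - X_s)\otimes X_s + X_s \otimes (X_s^n - X_s) + (X_s^n - X_s)\otimes (X_s^n - X_s).
\]
When multiplied by $\sqrt{n}$, the first two summands contribute additional linear drift terms in $Y_s^n$ whose test functions are uniformly bounded by $\bar{\al}\sup_{s\leq T}\langle X_s, 1\rangle < \infty$ (thanks to \eqref{LLN Limit_initial}), whereas the third summand becomes a quadratic perturbation $n^{-1/2}\int_0^t \langle Y_s^n\otimes Y_s^n, F_\phi\rangle ds$, where $F_\phi(x,y) := \phi(x)\al(x,y)$.

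Each of the four linear drift terms can then be controlled by Cauchy-Schwarz in time: $\E[\sup_{r\leq t}(\int_0^r \langle Y_s^n, \psi_s\rangle ds)^2] \leq T \int_0^t \E\langle Y_s^n, \psi_s\rangle^2 ds$, and writing $\psi_s = K \tilde\psi_s$ with $\|\tilde\psi_s\|_\infty \leq 1$ for a deterministic $K$ independent of $s$ and $n$ gives a contribution of the form $K^2 T \int_0^t u(s) ds$. The martingale term is handled via Doob's inequality: $\E[\sup_{t\leq T} M_t^n(\phi)^2] \leq 4\,\E\langle M^n(\phi)\rangle_T$, which is uniformly bounded in $n$ via \eqref{martingale BPDL rescaled_variation}, the hypothesis $\|\phi\|_\infty \leq 1$, and the second moment estimate \eqref{moment estimate X} from Lemma \ref{Moment estimate X_lemma}.

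The main obstacle is the $n^{-1/2}$ quadratic perturbation. Using assumption (A1) to expand $\al(x,y) = \sum_i f_i(x) g_i(y)$, its integrand reads $\sum_i \langle Y_s^n, \phi f_i\rangle \langle Y_s^n, g_i\rangle$; controlling its $L^2$ norm inevitably requires a uniform fourth-moment estimate on $\langle Y_s^n, \cdot\rangle$. I would therefore first establish, as an auxiliary step, the bound $\sup_n \sup_{t \leq T} \sup_{\|\phi\|_\infty \leq 1} \E\langle Y_t^n, \phi\rangle^4 < \infty$ by running an analogous Gronwall argument at the fourth-moment level (without the inner $\sup_r$), using that \eqref{moment condition} provides a uniform fourth moment on $Y_0^n$, together with a matching estimate $\sup_n \E[\sup_{s\leq T}\langle X_s^n, 1\rangle^4] < \infty$ (obtained as in \eqref{moment estimate X}, since $\E\langle X_0^n, 1\rangle^4 < \infty$ also follows from \eqref{moment condition} via $X_0^n = X_0 + Y_0^n/\sqrt{n}$). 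Once these auxiliary bounds are in hand, Cauchy-Schwarz applied to each $\E[\langle Y_s^n, \phi f_i\rangle^2 \langle Y_s^n, g_i\rangle^2]$ shows that the $L^2$ norm of the quadratic perturbation is uniformly bounded in $n$ (the prefactor $n^{-1}$ is absorbed by $L^4 = O(1)$). Assembling all contributions yields $u(t) \leq C_T + C_T\int_0^t u(s) ds$, whence Gronwall's lemma gives $u(T) \leq C_T \exp(C_T T)$, completing the proof.
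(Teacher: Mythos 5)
Your overall architecture (martingale decomposition from Proposition \ref{Martingale BPDL Rescaled}, Cauchy--Schwarz on the drift integrals, Doob for the martingale part, Gronwall to close) matches the paper, and your treatment of the linear drift terms and of $M^n_t(\phi)$ is fine. The difference, and the gap, is in how you handle the interaction term. You expand $X^n\otimes X^n-X\otimes X$ fully into two linear terms plus the quadratic perturbation $n^{-1/2}\int_0^t\langle Y^n_s\otimes Y^n_s,F_\phi\rangle\,ds$, and you propose to control the latter via an auxiliary uniform bound $\sup_n\sup_{t\le T}\sup_{\|\phi\|_\infty\le1}\E\langle Y^n_t,\phi\rangle^4<\infty$, claimed to follow ``by an analogous Gronwall argument at the fourth-moment level.'' That auxiliary step is exactly where the difficulty lives, and the analogous argument does not close: at the fourth-moment level the same quadratic perturbation contributes $n^{-2}\E\langle Y^n_s,\cdot\rangle^4\langle Y^n_s,\cdot\rangle^4$, i.e.\ an eighth moment, and since $\langle Y^n,\psi\rangle^2\le 2n(\langle X^n,1\rangle^2+\langle X,1\rangle^2)$ the prefactor $n^{-2}$ only reduces this to a cross term of the form $\E[\langle Y^n_s,\cdot\rangle^4\langle X^n_s,1\rangle^4]$ (or, worse, $n^2\E\langle X^n_s,1\rangle^8$). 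Controlling that requires tail estimates on $\langle X^n_s,1\rangle$ of order $y^{-(8+\epsilon)}$, whereas the hypothesis \eqref{moment condition} gives only a $(4+\delta)$-moment at time zero, i.e.\ $\Pb(\langle X^n_0-X_0,1\rangle>y)\lesssim n^{-2-\delta/2}y^{-(4+\delta)}$ --- calibrated precisely for the second-moment level and insufficient for the fourth. So the hierarchy of moments you invoke is not self-closing, and the fourth-moment bound is left unproved (and is not obtainable by the route you describe under the stated hypotheses). Note also that your second-moment reduction is in fact equivalent in difficulty to the paper's term {\bf III}: bounding one factor by $\langle Y^n,\psi\rangle^2\le 2n(\langle X^n,1\rangle^2+\langle X,1\rangle^2)$ turns your $n^{-1}\E[\langle Y^n,\psi_1\rangle^2\langle Y^n,\psi_2\rangle^2]$ back into $\E[\langle Y^n,\psi_1\rangle^2\langle X^n,1\rangle^2]$.

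The paper closes this term at the second-moment level without ever needing a fourth moment of $Y^n_t$: it keeps the decomposition $Y^n\otimes X^n+X\otimes Y^n$, uses the product form of $\alpha$ (Lemma \ref{lem:estimateInteraction}) to reduce term {\bf III} to $\E[\langle\phi,Y^n_s\rangle^2\langle 1,X^n_s\rangle^2]$, truncates $\langle X^n_s,1\rangle$ at a level $A$ (the truncated part feeds into Gronwall with constant $A^2$), and bounds the excess over the event $\{\langle X^n_s,1\rangle>A\}$ by Lemma \ref{lem:finiteness}, which rests on a large-deviation comparison of $\langle X^n_s,1\rangle$ with a rescaled pure birth process (Lemma \ref{lem:largedeviation}) together with the $(4+\delta)$-moment of $Y^n_0$ to kill the factor $n$ coming from $\langle Y^n,\phi\rangle^2\le 2n(\ldots)$. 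Some substitute for this truncation--large-deviation step (or an honest proof of your fourth-moment claim) is needed to make your argument complete.
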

\begin{proof}

From Proposition \ref{Martingale BPDL Rescaled}, by H\"older inequality, one obtains that
\beq
\bea
\langle Y_t^n,\phi\rangle^2&\leq 2\Bigg\{\langle Y_0^n,\phi\rangle^2+t\int_0^t\Big(\int_{\X}Y_s^n(dx)b(x)\int_{\R^d}\phi(x+z)D(x,dz)\Big)^2ds\\
&\qquad+t\int_0^t\Big(\int_{\X}Y_s^n(dx)d(x)\phi(x)\Big)^2ds\\
&\qquad+nt\int_0^t\Big(\int_{\X}X_s^n(dx)\phi(x)\int_{\X}\al(x,y)X_s^n(dy)\\
&\qquad\qquad\qquad-\int_{\X}X_s(dx)\phi(x)\int_{\X}\al(x,y)X_s(dy)\Big)^2ds\\
&\qquad+\Big[M_t^n(\phi)\Big]^2\Bigg\}\\
&\leq 2\Bigg\{\langle Y_0^n,\phi\rangle^2+t\int_0^t\Big(\int_{\X}Y_s^n(dx)b(x)\int_{\R^d}\phi(x+z)D(x,dz)\Big)^2ds\\
&\qquad+t\int_0^t\Big(\int_{\X}Y_s^n(dx)d(x)\phi(x)\Big)^2ds\\
&\qquad+2t\int_0^t\Big(\int_{\X}Y_s^n(dx)\phi(x)\int_{\X}\al(x,y)X_s^n(dy)\Big)^2ds\\
&\qquad\qquad\qquad+2t\int_0^t\Big(\int_{\X}X_s(dx)\phi(x)\int_{\X}\al(x,y)Y_s^n(dy)\Big)^2ds\\
&\qquad+\Big[M_t^n(\phi)\Big]^2\Bigg\}.
\eea
\eeq
Since $\E[\sup\limits_{0\leq t\leq T}\langle Y_t^n,\phi\rangle^2]$ is a finite quantity thanks to $X$ and the definition of $Y^n$, for any fixed $T<\infty$, first take the supremum over time interval $[0,T]$, then take expectations on both sides. It follows that, for any $\phi\in \Bc(\X)$ satisfying $\|\phi\|_{\infty}\leq 1$,
\beq\label{moment estimate Y_Gronwall}
\bea
\E[\sup\limits_{0\leq t\leq T}\langle Y_t^n,\phi\rangle^2]
&\leq 2\E\langle Y_0^n,\phi\rangle^2+2T\bar{b}^2\int_0^T\E\Big(\int_{\X}Y_s^n(dx)\frac{b(x)}{\bar{b}}\int_{\X}\phi(x+z)D(x,dz)\Big)^2ds\\
&\qquad+2T\bar{d}^2\int_0^T\E\Big(\int_{\X}Y_s^n(dx)\frac{d(x)}{\bar{d}}\phi(x)\Big)^2ds\\
&\qquad+4T\bar{\al}^2\int_0^T\E\Big(\int_{\X}Y_s^n(dx)\phi(x)\int_{\X}\frac{\al(x,y)}{\bar{\al}}X_s^n(dy)\Big)^2ds\\
&\qquad+4T\bar{\al}^2\int_0^T\E\Big(\int_{\X}X_s(dx)\phi(x)\int_{\X}\frac{\al(x,y)}{\bar{\al}}Y_s^n(dy)\Big)^2ds\\
&\qquad+2\E\Big\{\sup\limits_{0\leq t\leq T}[M_t^n(\phi)]^2\Big\}\\
&\defi 2\E\langle Y_0^n,\phi\rangle^2+\text{{\bf I}}+\text{{\bf II}}+\text{{\bf III}}+\text{{\bf IV}+\text{{\bf V}}}.
\eea
\eeq
To the end, we give estimate of every term in the equation above separately. \\
As for term {\bf V}, by Doob's maximal inequality and \eqref{martingale BPDL rescaled_variation}, we have that
\beq
\bea
\text{{\bf V}}&\leq 2\times 4 \E\Big[M_{T}^n(\phi)^2\Big]\\
&\leq 8\E [\langle M_{\cdot}^n(1)\rangle_T]\\
&\leq 8(\bar{b}+\bar{d})\E\Big\{\int_0^T \sup\limits_{0\leq u\leq s}\langle X_u^n, 1\rangle ds\Big\}+ 8\bar{\al}\E\int_0^T \sup\limits_{0\leq u\leq s}\langle X_u^n,1\rangle^2ds\\
&\leq 8(\bar{b}+\bar{d}+\bar{\al}) T\cdot(C_T^{(0)}+C_T^{(1)}),
\eea
\eeq
where the last inequality is due to \eqref{moment estimate X}.\\
Since $\|\frac{b(x)}{\bar{b}}\int_{\R^d}\phi(x+z)D(x,dz)\|_{\infty}\leq 1,\,\|\frac{d(x)}{\bar{d}}\phi(x)\|_{\infty}\leq1$, one obtains that
\beq
\text{{\bf I}}+\text{{\bf II}}\leq 2T^2\big(\bar{b}^2+\bar{d}^2\big)\int_0^T\sup\limits_{\phi\in\Bc(\X); \|\phi\|_{\infty}\leq 1}\E[\sup\limits_{0\leq u\leq s}\langle Y_u^n,\phi\rangle^2]ds.
\eeq\\
Similarly, {\bf IV} can be bounded by $4T^2\bar{\al}^2C\int_0^T\sup\limits_{\phi\in\Bc(\X); \|\phi\|_{\infty}\leq 1}\E[\sup\limits_{0\leq u\leq s}\langle Y_u^n,\phi\rangle^2]ds$ with some constant $C$  since $\tilde\phi(y)=\int\limits_{\Xc}X_u(dx)\frac{\alpha(x,y)}{\bar{\alpha}}\phi(x)$ is supremum norm bounded by a constant due to the boundedness condition \eqref{LLN Limit_initial}.\\

Term {\bf III} is the source of all troubles. We estimate it via Lemma \ref{lem:estimateInteraction} to get 
\[{\bf III}\leq 4T\bar{\alpha}^2m^d\int_{0}^{T}\sup_{\phi\in\Bc(\X); \|\phi\|_{\infty}\leq 1}\Ebb{\langlerangle{\phi,Y^n_s}^2\langlerangle{1,X^n_s}^2} ds.\]
For any $A>0$ we have that 
\begin{align*}
{\bf III}\leq &4TA^2\bar{\alpha}^2m^d\int_{0}^{T}\sup_{\phi\in\Bc(\X); \|\phi\|_{\infty}\leq 1}\Ebb{\sup_{0\leq u\leq s}\langlerangle{\phi,Y^n_u}^2} ds\\
&+4T\bar{\alpha}^2m^d\int_{0}^{T}\sup_{\phi\in\Bc(\X); \|\phi\|_{\infty}\leq 1}\Ebb{\langlerangle{\phi,Y^n_s}^2\langlerangle{1,X^n_s}^2;\langlerangle{1,X^n_s}>A} ds
\end{align*}
From Lemma \ref{lem:finiteness} we get that with some $D>0$
\[\sup_{n\geq 1}4TA^2\bar{\alpha}^2m^d\int_{0}^{T}\Ebb{\langlerangle{\phi,Y^n_s}^2\langlerangle{1,X^n_s}^2;\langlerangle{1,X^n_s}>A} ds<D<\infty.\]
Therefore
\begin{align*}
{\bf III}\leq &4TA^2\bar{\alpha}^2m^d\int_{0}^{T}\sup_{||\phi||_{\infty}\leq 1}\Ebb{\sup_{0\leq u\leq s}\langlerangle{\phi,Y^n_u}^2} ds+D
\end{align*}

Let $G^n(T):=\sup\limits_{\phi\in\Bc(\X); \|\phi\|_{\infty}\leq 1}\E[\sup\limits_{0\leq t\leq T}\langle Y_t^n,\phi\rangle^2]$, by combining the estimates above and \eqref{moment estimate Y_Gronwall}, one obtains that
\beq
\bea
G^n(T)
\leq & 2\sup\limits_{n\geq 1}\sup\limits_{\phi\in\Bc(\X); \|\phi\|_{\infty}\leq 1}\E\langle Y_0^n,\phi\rangle^2+8(\bar{b}+\bar{d}+\bar{\al}) T\cdot(C_T^{(0)}+C_T^{(1)})\\
&+\Big(2T^2(\bar{b}^2+\bar{d}^2)+4T^2\bar{\al}^2C\Big)\int_0^T G^n(s)ds
\eea
\eeq
By Gronwall's lemma, we have that
\beq
\bea
G^n(T)\leq &\Big(2\sup\limits_{n\geq 1}\sup\limits_{\phi\in\Bc(\X); \|\phi\|_{\infty}\leq 1}\E\langle Y_0^n,\phi\rangle^2+8(\bar{b}+\bar{d}+\bar{\al}) T\cdot(C_T^{(0)}+C_T^{(1)})\Big)\\
&\cdot \exp \Big\{\Big(2T^2(\bar{b}^2+\bar{d}^2)+4T^2\bar{\al}^2C\Big)T\Big\}\\
&\defi C_T^{(2)}.
\eea
\eeq
Since $C_T^{(2)}$ is a $n$-independent constant,
the lemma follows by taking supremum over $n\in\N$ on both sides of the last inequality.
\end{proof}

Here we present some axillary results needed for the proofs above. 
\begin{lem}\label{lem:largedeviation}
With $X^n_s$ defined as above we have that for any $A>0$, $n\in\Nb_+$, $s>0$ and $j<[A]$
\begin{equation}\label{eq:fourthmoment}
\Pbb{\langlerangle{X^n_s,1}>A}\leq \sum_{m=0}^{nj}e^{\lbrb{n-\frac{m}{[A]}}\lbrb{s-\ln\lbrb{1+[A]\frac{1-1/[A]}{1+1/n}}}}\Pbb{\langlerangle{X^n_0,1}=\frac{m}{n}}+\Pbb{\langlerangle{X^n_0,1}>j}
\end{equation}
Next if $A=A(s)$ and $n$ are so big that $[A]\frac{1-1/[A]}{1+1/n}>\frac{A}{2}$, $s-\ln\lbrb{1+[A]/2}<0$ and take $j=[\beta A]$, for any $\beta\in(0,1)$, then
\begin{equation}\label{eq:fourthmoment1}
\Pbb{\langlerangle{X^n_s,1}>A}\leq e^{n\lbrb{1-\beta}\lbrb{s-\ln\lbrb{1+[A]/2}}}+\Pbb{\langlerangle{X^n_0,1}>[\beta A]}
\end{equation}
Finally, put $A=2e^{s+t}-2$, $t>0$, to get 
\begin{equation}\label{eq:fourthmoment2}
\Pbb{\langlerangle{X^n_s,1}>A}\leq e^{-n(1-\beta)t}+\Pbb{\langlerangle{X^n_0,1}>[\beta A]},
\end{equation}
where clearly $t\sim \ln A$, as $A\to\infty$.
\end{lem}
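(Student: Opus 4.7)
The plan is to dominate the total mass $I^n_s := n\langlerangle{X^n_s,1}$ pathwise by a rate-$1$ Yule process and then apply an exponential Markov inequality, averaged over the initial value $I^n_0$. From the generator \eqref{generator BPDL}, $I^n_s$ increases by $1$ at rate at most $\bar b\,I^n_s$ (since $b\le\bar b$) and decreases at a nonnegative rate (death plus competition), so a standard coupling gives $I^n_s \le \tilde I_s$ almost surely, where $\tilde I$ is a Yule process with per-capita rate $\bar b$; absorbing $\bar b$ into $s$, we may take the rate to be $1$. Conditioning on $I^n_0 = m$ reduces \eqref{eq:fourthmoment} to a per-$m$ estimate $\Pbb{\tilde I_s > nA\mid \tilde I_0 = m}\le e^{(n-m/[A])(s-\ln(1+\gamma))}$ with $\gamma := [A](1-1/[A])/(1+1/n)$, followed by the law-of-total-probability decomposition $\Pbb{\langlerangle{X^n_s,1}>A}=\sum_m\Pbb{\tilde I_s>nA\mid\tilde I_0=m}\Pbb{I^n_0=m}$, truncation at $m\le nj$, and trivial absorption of the residue into $\Pbb{\langlerangle{X^n_0,1}>j}$.

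For the per-$m$ estimate, apply Markov's inequality to $\lambda^{\tilde I_s}$ for a suitable $\lambda > 1$ with $\lambda(1-e^{-s})<1$, using the explicit moment generating function
\[
\Ebb{\lambda^{\tilde I_s}\mid \tilde I_0 = m}=\lbrb{\lambda e^{-s}/(1-\lambda(1-e^{-s}))}^m,
\]
which yields $\Pbb{\tilde I_s>nA\mid\tilde I_0=m}\le \lambda^{-nA}\bigl(\lambda e^{-s}/(1-\lambda(1-e^{-s}))\bigr)^m$. One chooses $\lambda$ so that $\lambda^{[A]}=1+\gamma=(n[A]+1)/(n+1)$; then $\lambda^{-nA}$ (using $[A]\le A$) contributes the $n$-linear term of the desired exponent, and the $m$-dependent factor is controlled by $e^{-(m/[A])(s-\ln(1+\gamma))}$ via elementary inequalities on $\lambda e^{-s}/(1-\lambda(1-e^{-s}))$, producing the factored form $e^{(n-m/[A])(s-\ln(1+\gamma))}$. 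Equivalently, via the negative-binomial/binomial duality $\Pbb{\tilde I_s>nA\mid\tilde I_0=m}=\Pbb{B_{[nA],e^{-s}}<m}$, the same bound follows from a lower-tail Chernoff estimate on a binomial.

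The reductions \eqref{eq:fourthmoment1} and \eqref{eq:fourthmoment2} are elementary calculus on this explicit form. Under the hypotheses of \eqref{eq:fourthmoment1}, $s-\ln(1+[A]/2)<0$ and $\gamma\ge [A]/2$, so $s-\ln(1+\gamma)\le s-\ln(1+[A]/2)<0$ and the per-$m$ bound is monotone increasing in $m\in\{0,\dots,n[\beta A]\}$. Evaluating at $m=n[\beta A]$ and combining with the elementary inequality $(1-[\beta A]/[A])(s-\ln(1+\gamma))\le(1-\beta)(s-\ln(1+[A]/2))$ (valid under the strict hypothesis $\gamma>A/2$ and for $[A]$ sufficiently large, since $|[\beta A]/[A]-\beta|\le 1/[A]$ is absorbed into the strict gap in the $s-\ln(1+\gamma)$ factor) produces the uniform upper bound $e^{n(1-\beta)(s-\ln(1+[A]/2))}$, whence \eqref{eq:fourthmoment1}. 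For \eqref{eq:fourthmoment2}, substituting $A=2e^{s+t}-2$ gives $\ln(1+[A]/2)\ge s+t$ (up to the integer-part correction) and therefore $s-\ln(1+[A]/2)\le -t$; the observation $t\sim\ln A$ as $A\to\infty$ is then immediate. The main technical obstacle is the Chernoff selection of $\lambda$ in the second step: the identity $\lambda^{[A]}=(n[A]+1)/(n+1)$ is the specific choice that forces the Chernoff bound to collapse into the factored form $(n-m/[A])(s-\ln(1+\gamma))$ prescribed by the lemma, and this algebraic reconciliation is the origin of the unusual correction $(1-1/[A])/(1+1/n)$ inside the logarithm.
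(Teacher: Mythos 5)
Your high-level strategy coincides with the paper's: dominate the total mass pathwise by a pure birth (Yule) process of rate $\bar b$ started from the same initial configuration, prove a per-$m$ exponential bound by a Chernoff argument, and then sum over the initial mass via the total probability formula, truncating at $m\le nj$. The deductions of \eqref{eq:fourthmoment1} and \eqref{eq:fourthmoment2} from \eqref{eq:fourthmoment} are also the same elementary substitutions as in the paper. However, there is a genuine gap in your per-$m$ estimate. You apply Markov's inequality to $\lambda^{\tilde I_s}$ using the negative-binomial generating function $\Ebb{\lambda^{\tilde I_s}\mid\tilde I_0=m}=\lbrb{\lambda e^{-s}/(1-\lambda(1-e^{-s}))}^m$, which is finite only for $\lambda<(1-e^{-s})^{-1}$, and you then fix $\lambda$ by $\lambda^{[A]}=1+\gamma$ with $\gamma=[A](1-1/[A])/(1+1/n)$. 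These two requirements are incompatible on a nontrivial part of the parameter range where the lemma must still hold. For instance, with $n=10$, $[A]=10$, $s=2$ one has $\gamma\approx 8.18$, $\ln(1+\gamma)\approx 2.22>s$, so \eqref{eq:fourthmoment} is a meaningful (sub-unit) bound; yet $\lambda=(1+\gamma)^{1/[A]}\approx 1.25$ gives $\lambda(1-e^{-s})\approx 1.08>1$ and the generating function diverges, so your per-$m$ bound is vacuous there. Choosing a smaller admissible $\lambda$ destroys the algebraic identity you rely on to produce the factored exponent $(n-m/[A])(s-\ln(1+\gamma))$, so the fix is not merely cosmetic.

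The paper avoids this entirely by running the Chernoff bound on the dual random variable: it writes the event $\{\langlerangle{\tilde\nu^n_s,1}>[A]n\}$ as $\{\sum_i T_i(m)\le s\}$ for independent $T_i(m)\sim Exp(m+i)$, and bounds $\Pbb{\sum_i T_i\le s}\le e^{\lambda s}\prod_i\frac{m+i}{m+i+\lambda}$, a bound valid for \emph{every} $\lambda>0$ with no convergence constraint. The factor $e^{\lambda s}$ with the choice $\lambda=([A]n-m)/[A]=n-m/[A]$ is precisely what produces the $ns$-term in the exponent (which in your version would have to be extracted from the $m$-indexed factor, and cannot be at $m$ small), and the product is controlled by $\ln(1-x)\le -x$ plus an integral comparison, yielding the logarithmic term with the correction $(1-1/[A])/(1+1/n)$. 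If you want to keep your population-size formulation, you would need to restrict to the unconstrained lower-tail binomial form $\Pbb{B_{[A]n,e^{-s}}<m}$ you mention in passing and redo the optimization there; as written, the argument does not establish \eqref{eq:fourthmoment}.
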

\begin{proof}
For each $n\in\Nb_+$ we use the obvious pathwise upper bound for the mass of the measure $X^{n}_s$ by the total mass of the rescaled pure birth process of birth rate $\bar b$ and initial random measure $\tilde\nu^n_0=\nu^n_0$, say $\tilde{X}^n_s=\tilde\nu^n_s/n$. Then clearly, for any $m>0$
\begin{align*}
& \Pbb{\langlerangle{X^n_s,1}>A\Big|\langlerangle{X^n_0,1}=\frac{m}{n}}\leq \Pbb{\langlerangle{\tilde{X}^n_s,1}>A\Big|\langlerangle{X^n_0,1}=\frac{m}{n}}\\
&=\Pbb{\langlerangle{\tilde{\nu}^n_s,1}>[A]n\Big| \langlerangle{\tilde{\nu}^0_n,1}=m},
\end{align*}
where $[A]$ stands for the smallest integer less than $A$. Note that, for any $m\in\Nb_+$
\[O_{m,n}(s)=\left\{\langlerangle{\tilde\nu^n_s,1}>[A]n\Big|\tilde\nu^n_0=m\right\}=\left\{\sum_{i=0}^{\min(0,[A]n-m)}T_{i}(m)\leq s \right\},\]
where $T_{i}(m)\sim Exp(m+i)$, $i\geq 1$, are independent random variables. Assume that $k=\min(0,[A]n-m)>0$. Then for any $\lambda>0$ using Markov's inequality we get
\[\Pbb{O_{m,n}(s)}\leq e^{\lambda s}e^{-\lambda \sum_{i=0}^{k}T_{i}(m)}=e^{\lambda s}\prod_{i=0}^{k}\frac{m+i}{m+i+\lambda}=e^{\lambda s}e^{\sum_{i=0}^{k}\ln{\lbrb{1-\frac{\lambda}{m+i+\lambda}}}}.\]
Using $\ln(1-x)\leq -x$ for $0\leq x\leq 1$ we get
\[\Pbb{O_{m,n}(s)}\leq e^{\lambda s}e^{-\lambda\sum_{i=0}^{k}\frac{1}{m+i+\lambda}}\leq e^{\lambda s}e^{-\lambda\int_{m+1+\lambda}^{m+1+k+\lambda}\frac{1}{x}dx}=e^{\lambda\lbrb{s-\ln\lbrb{\frac{m+k+1+\lambda}{m+1+\lambda}}}}.\]
Upon choosing $\lambda=Ck=C([A]n-m)$ for some $C>0$ we get
\begin{align*}
&\Pbb{O_{m,n}(s)}\leq e^{C\lbrb{[A]n-m} \lbrb{s-\ln\lbrb{\frac{(1+C)[A]n+1-m}{C[A]n+1}}}}\\
&=e^{C\lbrb{[A]n-m}\lbrb{s-\ln\lbrb{1+\frac{1-Cm/[A]n}{C+1/[A]n}}}}.
\end{align*}
Finally choose $C=1/[A]$ to get
\[\Pbb{O_{m,n}(s)}\leq e^{\lbrb{n-\frac{m}{[A]}}\lbrb{s-\ln\lbrb{1+[A]\frac{1-m/[A]^2n}{1+1/n}}}}.\]
Note that since by definition $[A]n-m\geq 0$ we see that $1-m/[A]^2n\geq 1-1/[A]$ and hence
\[\Pbb{O_{m,n}(s)}\leq e^{\lbrb{n-\frac{m}{[A]}}\lbrb{s-\ln\lbrb{1+[A]\frac{1-1/[A]}{1+1/n}}}} \]
Since upon conditioning on the total mass of the initial condition $\langlerangle{X^n_0,1}$ the probability of the set $\{\langlerangle{X^n_s,1}>A\}$ can be computed via the total probability formula. Therefore \eqref{eq:fourthmoment} follows and next \eqref{eq:fourthmoment1} and \eqref{eq:fourthmoment2} are deduced by mere substitution of the choice of $A$, $j$ in \eqref{eq:fourthmoment}.
\end{proof}
The next lemma allows us to handle suitable quantities.
\begin{lem}\label{lem:finiteness}
We have that for any fixed $ T>0$ there is $A>0$ such that
\begin{align*}
&\sup_{n\geq 1}\sup_{0\leq s\leq T}\sup_{\phi\in\Bc(\Xc);||\phi||_\infty \leq 1}\Ebb{\langlerangle{Y^n_s,\phi}^2\langlerangle{X^n_s,1}^2;\langlerangle{X^n_s,1}>A}<\infty
\end{align*}
\end{lem}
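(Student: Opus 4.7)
The plan is to reduce the claim to a pure total-mass estimate on $\langlerangle{X^n_s,1}$. Since $\|\phi\|_\infty\leq 1$, one has $|\langlerangle{Y^n_s,\phi}|\leq\sqrt n\,\lb\langlerangle{X^n_s,1}+\langlerangle{X_s,1}\rb$, and \eqref{LLN Limit_initial} provides $M:=\sup_{s\in[0,T]}\langlerangle{X_s,1}<\infty$. Hence for any $A>M$, on the event $\{\langlerangle{X^n_s,1}>A\}$ one has $\langlerangle{X_s,1}<\langlerangle{X^n_s,1}$ and therefore $\langlerangle{Y^n_s,\phi}^2\leq 4n\langlerangle{X^n_s,1}^2$, reducing the claim to
\[\sup_{n\geq 1}\sup_{0\leq s\leq T}n\,\Ebb{\langlerangle{X^n_s,1}^4;\langlerangle{X^n_s,1}>A}<\infty\]
for some $A$ chosen sufficiently large.

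I would next establish an auxiliary uniform moment bound $\sup_{n\geq 1}\sup_{s\in[0,T]}\Ebb{\langlerangle{X^n_s,1}^{4+\delta}}<\infty$ via the pathwise domination of $X^n_s$ by the rescaled pure birth process $\tilde X^n_s$ from the proof of Lemma \ref{lem:largedeviation}, standard moment estimates for sums of i.i.d.\ Yule random variables, and the identity $\langlerangle{X^n_0,1}=\langlerangle{X_0,1}+\langlerangle{Y^n_0,1}/\sqrt n$ combined with \eqref{moment condition}. Fixing a threshold $n_0\in\Nb$ to be chosen below, this handles the range $1\leq n\leq n_0$ trivially:
\[n\,\Ebb{\langlerangle{X^n_s,1}^4;\langlerangle{X^n_s,1}>A}\leq n_0\sup_{s\leq T}\Ebb{\langlerangle{X^n_s,1}^4}<\infty.\]

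For $n>n_0$ the argument is quantitative. Write the truncated moment as the tail integral
\[\Ebb{\langlerangle{X^n_s,1}^4;\langlerangle{X^n_s,1}>A}=A^4\Pbb{\langlerangle{X^n_s,1}>A}+4\int_A^\infty u^3\,\Pbb{\langlerangle{X^n_s,1}>u}\,du,\]
and apply \eqref{eq:fourthmoment2} with $\beta=1/2$. For $A$ large enough that the constraints of Lemma \ref{lem:largedeviation} are met uniformly in $u\geq A$ and $s\in[0,T]$, $t(u)=\ln((u+2)/2)-s\geq c\ln u$ for some $c\in(0,1)$, and hence $e^{-n(1-\beta)t(u)}\leq u^{-cn/2}$. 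Moreover, Markov's inequality applied to $\langlerangle{Y^n_0,1}$ together with \eqref{moment condition} gives $\Pbb{\langlerangle{X^n_0,1}>[\beta u]}\leq C_1 n^{-(4+\delta)/2}u^{-(4+\delta)}$ for $u$ large. Substituting these two bounds and choosing $n_0$ so that $cn_0/2>8$, the Yule-tail contribution to the tail integral is bounded by $Cn A^{4-cn/2}$, which is exponentially small in $n$ once $A>1$, while the initial-condition contribution is of order $n^{-(2+\delta)/2}A^{-\delta}$; the boundary term $nA^4\Pbb{\langlerangle{X^n_s,1}>A}$ is estimated identically.

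The main obstacle is to achieve joint uniformity in both $n$ and $s$: the large deviation bound \eqref{eq:fourthmoment2} only becomes summable against $u^3$ once $n(1-\beta)$ exceeds $8$, which forces the split at $n_0$. The assumption \eqref{moment condition} is the common source of both ingredients, powering the $L^{4+\delta}$-bound used in the small-$n$ range and supplying the polynomial decay in $u$ needed to control the initial-condition contribution in the large-$n$ regime.
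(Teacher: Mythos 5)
Your proposal is correct and follows essentially the same route as the paper: reduce the truncated expectation to $n\,\Ebb{\langlerangle{X^n_s,1}^4;\langlerangle{X^n_s,1}>A}$ using $|\langlerangle{Y^n_s,\phi}|\leq\sqrt n\lbrb{\langlerangle{X^n_s,1}+\langlerangle{X_s,1}}$ and \eqref{LLN Limit_initial}, rewrite this as a tail integral, control the tail of $\langlerangle{X^n_s,1}$ via the pure-birth large deviation bound \eqref{eq:fourthmoment2}, and control the initial-condition tail $\Pbb{\langlerangle{X^n_0,1}>[\beta u]}$ by Markov's inequality together with \eqref{moment condition}, the factor $n^{-(4+\delta)/2}$ absorbing the prefactor $n$.

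The one genuine difference is your split at a threshold $n_0$ with $cn_0/2>8$, handling $n\leq n_0$ by a separate uniform $(4+\delta)$-moment bound obtained from the Yule domination. This is not merely cosmetic: the paper asserts directly that $\sup_{n\geq1}\sup_{s\leq T}\, 8n\lbrb{1+\langlerangle{X_s,1}^2}\int_A^\infty e^{-n(1-\beta)t(y)}y^3\,dy<\infty$, but since $e^{-n(1-\beta)t(y)}$ decays only like $y^{-n(1-\beta)}$, that integral actually diverges for $n(1-\beta)\leq 4$, so the paper's uniformity claim as written fails in the small-$n$ range. Your version supplies exactly the missing ingredient (a per-$n$, or uniform, fourth-moment bound covering finitely many small $n$) and is therefore a more careful rendering of the same argument; the large-$n$ regime is then handled quantitatively exactly as in the paper.
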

\begin{proof}
Clearly by the definition of $Y^n_s=\sqrt{n}\lbrb{X^n_s-X_s}$ and the fact that $\langlerangle{X_s,1}$ is a bounded deterministic process on any finite interval we get using H$\ddot{o}$lder's inequality and $(a+b)^2\leq 2a^2+2b^2$ that
\begin{align*}
&\Ebb{\langlerangle{Y^n_s,\phi}^2\langlerangle{X^n_s,1}^2;\labsrabs{\langlerangle{X^n_s,1}}>A}\\
\leq &2n\lbrb{1+\langlerangle{X_s,1}^2}\Ebb{\langlerangle{X^n_s,1}^4;\labsrabs{\langlerangle{X^n_s,1}}>A}
\end{align*}
By a standard formula for positive random variables we have that
\[\Ebb{X^4;X>A}\leq 4\int_{A}^{\infty}\Pbb{X>y}y^3dy+4\Pbb{X>A}A^3\]
and thus
\begin{align*}
&\Ebb{\langlerangle{Y^n_s,\phi}^2\langlerangle{X^n_s,1}^2;\labsrabs{\langlerangle{X^n_s,1}}>A}\\ 
&\leq 8n\lbrb{1+\langlerangle{X_s,1}^2}\lbrb{\int_{A}^{\infty}\Pbb{\langlerangle{X^n_s,1}>y}y^3dy+A^3\Pbb{\langlerangle{X^n_s,1}>A}}
\end{align*}
Now for $s\geq 0$ fixed and $n$ we use Lemma \ref{lem:largedeviation} with $y=2(e^{s+t(y)}-1)$, for any $y\geq A$ and $\beta\in(0,1)$ to get
 \begin{align*}
 &\Ebb{\langlerangle{Y^n_s,\phi}^2\langlerangle{X^n_s,1}^2;\labsrabs{\langlerangle{X^n_s,1}}>A} \\
 &\leq 8n\lbrb{1+\langlerangle{X_s,1}^2}\lbrb{\int_{A}^{\infty} e^{-n(1-\beta)t(y)}y^3dy+\lbrb{2(e^{s+t(A)}-1)}^3e^{-n(1-\beta)t(A)}} \\
 &+ 8n\lbrb{1+\langlerangle{X_s,1}^2}\lbrb{\int_{A}^{\infty}\Pbb{\langlerangle{X^0_n,1}>[\beta y]}y^3dy}\\
 &+ 8n\lbrb{1+\langlerangle{X_s,1}^2}A^3\Pbb{\langlerangle{X^0_n,1}>[\beta A]},
  \end{align*}
  where for convenience we either keep $A$ or substitute it with  $2(e^{s+t(A)}-1)$.
  
  First note that according to Lemma \ref{lem:largedeviation} we have that for $A$ big enough and $y>A$ then $t(y)\sim \ln(y)$ uniformly for $s\in[0,T]$ and  $t(y)\geq t(A)\geq \ln(1+A/2)-T$ which shows that
  \begin{align*}
  &\sup_{n\geq 1}\sup_{s\leq T}\lbrb{ 8n\lbrb{1+\langlerangle{X_s,1}^2}\lbrb{\int_{A}^{\infty} e^{-n(1-\beta)t(y)}y^3dy+\lbrb{2(e^{s+t(A)}-1)}^3e^{-n(1-\beta)t(A)}}}<\infty
  \end{align*}
  Clearly, since $X_s$ is deterministic we have that for $A>2\langlerangle{X_0,1}$ and $y\geq A$ 
  \[\Pbb{\langlerangle{X^n_0,1}>y}\leq \Pbb{\langlerangle{X^n_0-X_0,1}>y/2}\leq \frac{2^{4+\delta}\Ebb{\labsrabs{\langlerangle{X^n_0-X_0,1}}^{4+\delta}}}{y^{4+\delta}}\wedge 1.\]
  Thus using this last inequality, for $A$ big enough, we have that 
   \begin{align*}
    &\sup_{n\geq 1}\sup_{s\leq T}\left\{8n\lbrb{1+\langlerangle{X_s,1}^2}\lbrb{\int_{A}^{\infty}\Pbb{\langlerangle{X^n_0,1}>[\beta y]}y^3dy}\right\}\\
    &+\sup_{n\geq 1}\sup_{s\leq T}\left\{8n\lbrb{1+\langlerangle{X_s,1}^2}A^3\Pbb{\langlerangle{X^n_0,1}>[\beta A]}\right\}<\infty
   \end{align*}
   since by \eqref{moment condition} and the definition of $Y_0^n$,
   \[
   \sup_{n\geq 1}\lbrb{n\Ebb{\labsrabs{\langlerangle{X^n_0-X_0,1}}^{4+\delta}}}\leq \sup\limits_{n\geq1}\lbrb{\sup\limits_{{\|\phi\|}_{\infty}\leq 1}\E\labsrabs{\langle Y^n_0,\phi\rangle}^{4+\delta}}<\infty.
   \]
\end{proof}
The next result is the key to the estimates.
\begin{lem}\label{lem:estimateInteraction}
If for some $m\in\Nb^+$, $\alpha(x,y)=\sum_{i=1}^{m^d}f_i(x)g_i(y)$, where $f_i,g_i\in C^{\infty}(\Xc)$ and $f_i\geq 0$, $g_i\geq 0$ for all $1\leq i\leq m^d$ then 
\begin{eqnarray}\label{eq:esimateInteraction}
&&\sup_{\phi\in\Bc(\X); \|\phi\|_{\infty}\leq 1}\Ebb{\lbrb{\int_{\Xc}\phi(x)Y^n_s(dx)\int_{\Xc}\alpha(x,y)X^n_s(dy)}^2}\nonumber\\
&& \leq Cm^{d}\sup_{\phi\in\Bc(\X); \|\phi\|_{\infty}\leq 1} \Ebb{\langlerangle{\phi,Y^n_s}^2\langlerangle{1,X^n_s}^2}
\end{eqnarray}
\end{lem}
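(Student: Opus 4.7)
The plan is to exploit the separable form of $\alpha$ guaranteed by assumption (A1) in order to reduce the estimate to a finite sum of products of linear functionals of $Y^n_s$ and $X^n_s$, each of which is easy to bound against the supremum on the right-hand side of \eqref{eq:esimateInteraction}.

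First I would use Fubini together with the decomposition $\alpha(x,y)=\sum_{i=1}^{m^d}f_i(x)g_i(y)$ to rewrite
\[
\int_{\Xc}\phi(x)Y^n_s(dx)\int_{\Xc}\alpha(x,y)X^n_s(dy)=\sum_{i=1}^{m^d}\langlerangle{\phi f_i,Y^n_s}\langlerangle{g_i,X^n_s}.
\]
Since $\phi f_i$ is bounded measurable and $X^n_s, Y^n_s$ have finite total variation almost surely, all pairings are well defined.

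Second, I apply the discrete Cauchy--Schwarz inequality $\lbrb{\sum_{i=1}^{m^d}a_i}^2\leq m^d\sum_{i=1}^{m^d}a_i^2$ to the square of the sum above, and then estimate each summand factor by factor. For the $X^n_s$-pairings the non-negativity of $g_i$ together with $X^n_s\geq 0$ gives $\labsrabs{\langlerangle{g_i,X^n_s}}\leq\|g_i\|_\infty\langlerangle{1,X^n_s}$. For the $Y^n_s$-pairings I set $\phi_i:=\phi f_i/\|f_i\|_\infty$, which lies in $\Bc(\Xc)$ with $\|\phi_i\|_\infty\leq 1$ whenever $\|\phi\|_\infty\leq 1$, so that $\langlerangle{\phi f_i,Y^n_s}^2=\|f_i\|_\infty^2\langlerangle{\phi_i,Y^n_s}^2$. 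Combining these bounds, taking expectations, and finally passing to the supremum over admissible $\phi$, each of the $m^d$ summands is controlled by $\|f_i\|_\infty^2\|g_i\|_\infty^2\sup_{\|\psi\|_\infty\leq 1}\Ebb{\langlerangle{\psi,Y^n_s}^2\langlerangle{1,X^n_s}^2}$, and the dimension-dependent prefactors—the $m^d$ from Cauchy--Schwarz, the number of summands, and the finite constant $\max_i\|f_i\|_\infty^2\|g_i\|_\infty^2$ supplied by smoothness of $f_i,g_i$ on the compact $\Xc$—can all be absorbed into the $Cm^d$ of the stated bound.

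There is no substantial obstacle here: the whole argument is essentially forced by the algebraic shape of the separable kernel, which is precisely why the authors impose (A1) at the CLT level. The only bookkeeping that requires even modest care is that the normalized test functions $\phi_i$ remain inside the admissible class $\{\psi\in\Bc(\Xc):\|\psi\|_\infty\leq 1\}$, which is immediate from $\|\phi f_i\|_\infty\leq\|\phi\|_\infty\|f_i\|_\infty$ and compactness of $\Xc$.
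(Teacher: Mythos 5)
Your argument is correct and is essentially the paper's proof: separate the kernel via (A1), apply $(\sum_{i=1}^{m^d}a_i)^2\le m^d\sum_{i=1}^{m^d}a_i^2$, use positivity of $X^n_s$ and of $g_i$ to reduce $\langle g_i,X^n_s\rangle$ to $\|g_i\|_\infty\langle 1,X^n_s\rangle$, and renormalize $\phi f_i$ by $\|f_i\|_\infty$ so that it stays in the unit ball of $\Bc(\Xc)$ before passing to the supremum. The one (harmless) difference is bookkeeping: as you note, summing the $m^d$ termwise bounds after the Cauchy--Schwarz step really produces $m^{2d}$ rather than $m^d$, which cannot be absorbed into an $m$-independent $C$; but since $m$ is fixed by (A1) and only the finiteness of the constant matters wherever the lemma is invoked, this is immaterial (the paper itself records only $m^dFG$ at the corresponding step).
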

\begin{proof}
The proof uses the inequality $(a_1+\ldots+a_{m^d})^2\leq m^d\lbrb{a^2_1+\ldots+a^2_{m^d}}$ which readily yields
\begin{eqnarray*}
&&\sup_{||\phi||_\infty\leq 1}\Ebb{\lbrb{\int_{\Xc}\phi(x)Y^n_s(dx)\int_{\Xc}\alpha(x,y)X^n_s(dy)}^2}\\
&&\leq \sup_{||\phi||_\infty\leq 1}\Ebb{\lbrb{\sum_{i=1}^{m^d}\langlerangle{\phi f_i,Y^n_s}\langlerangle{g_i,X^s_n}}^2}\\
&& = FG\sup_{||\phi||_\infty\leq 1}\Ebb{\lbrb{\sum_{i=1}^m\langlerangle{\phi f_i/F,Y^n_s}\langlerangle{g_i/G,X^s_n}}^2}\\
&&\leq m^{d} FG \sup_{||\phi||_\infty\leq 1}\Ebb{\langlerangle{\phi,Y^n_s}^2\langlerangle{1,X^n_s}^2},
\end{eqnarray*}
where $F:=\max_{i\leq m^d}||f_i||_{\infty}$ and $G:=\max_{j\leq m^d}||g_j||_{\infty}$ and we have made use of the fact that $0\leq g_j/G\leq 1$, $X^n_s$ is a positive random measure and $\phi f_i/F\in \Bc(\X)$. This proves \eqref{eq:esimateInteraction}.
\end{proof}

\begin{prop}\label{Tightness}
  Consider a sequence of processes $(Y_t^n)_{t\geq 0}$ in $\D([0,\infty),\M_F(\X))$ and $Y_0^n$ satisfying \eqref{moment condition}.
Then, for any $\phi\in \Bc(\X)$,
the sequence of laws of the processes $\{\langle Y_{\cdot}^n, \phi\rangle;n\geq1\}$ is tight in $\D([0,\infty),\R)$.
\end{prop}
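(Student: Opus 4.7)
The plan is to apply the Aldous--Rebolledo tightness criterion to the real-valued semimartingale $\langlerangle{Y^n_t,\phi}$. Proposition \ref{Martingale BPDL Rescaled} already provides the decomposition
\[
\langlerangle{Y^n_t,\phi}=\langlerangle{Y^n_0,\phi}+A^n_t(\phi)+M^n_t(\phi),
\]
where $M^n_t(\phi)$ is a c\`adl\`ag square-integrable martingale with the explicit quadratic variation \eqref{martingale BPDL rescaled_variation}, and $A^n_t(\phi)$ collects the four drift integrals in \eqref{martingale BPDL rescaled_Y^n}. It therefore suffices to prove (i) compact containment for the one-dimensional marginals, and (ii) for each fixed $N<\infty$, $\e>0$ and $\eta>0$, the existence of $\de>0$ with
\[
\sup_{n\geq 1}\sup_{\substack{S\leq T\leq S+\de\\ T\leq N}}\Pb\lbrb{\labsrabs{M^n_T(\phi)-M^n_S(\phi)}>\eta}<\e,\qquad \sup_{n\geq 1}\sup_{\substack{S\leq T\leq S+\de\\ T\leq N}}\Pb\lbrb{\labsrabs{A^n_T(\phi)-A^n_S(\phi)}>\eta}<\e,
\]
the supremum being over $(\Fc_t)$-stopping times $S\leq T$. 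Compact containment is immediate from Lemma \ref{Moment estimate Y_lemma} and Chebyshev: the random variables $\sup_{t\leq N}\labsrabs{\langlerangle{Y^n_t,\phi}}$ are uniformly $L^2$-bounded, so they take values in a compact subset of $\R$ with probability arbitrarily close to one.

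For the martingale increment, optional sampling together with \eqref{martingale BPDL rescaled_variation} gives
\[
\Ebb{(M^n_T(\phi)-M^n_S(\phi))^2}=\Ebb{\langle M^n_{\cdot}(\phi)\rangle_T-\langle M^n_{\cdot}(\phi)\rangle_S}\leq \|\phi\|_\infty^2\lbrb{\bar b+\bar d}\Ebb{\int_S^T\langlerangle{X^n_s,1}ds}+\|\phi\|_\infty^2\bar\alpha\,\Ebb{\int_S^T\langlerangle{X^n_s,1}^2ds}.
\]
By Lemma \ref{Moment estimate X_lemma} both right-hand side expectations are bounded by $\de\,C(N,\phi)$, uniformly in $n$, so Markov's inequality delivers the first Aldous bound. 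For the non-interaction parts of $A^n_T-A^n_S$, each integrand is of the form $\langlerangle{Y^n_s,\psi}$ with $\psi\in \Bc(\Xc)$ of norm at most $\|\phi\|_\infty\max(\bar b,\bar d)$; by Cauchy--Schwarz and Lemma \ref{Moment estimate Y_lemma} their $L^1$-norm over $[S,T]$ is at most $\de\,\|\phi\|_\infty\max(\bar b,\bar d)\sqrt{C^{(2)}_N}$.

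The delicate piece is the interaction drift, which carries the prefactor $\sqrt{n}$. Writing $\sqrt{n}(X^n_s\otimes X^n_s-X_s\otimes X_s)=Y^n_s\otimes X^n_s+X_s\otimes Y^n_s$ turns it into
\[
-\int_S^T\Big\{\int_{\Xc}Y^n_s(dx)\phi(x)\int_{\Xc}\al(x,y)X^n_s(dy)+\int_{\Xc}X_s(dx)\phi(x)\int_{\Xc}\al(x,y)Y^n_s(dy)\Big\}ds.
\]
The second summand is $\langlerangle{Y^n_s,\tilde\phi_s}$ with $\tilde\phi_s(y)=\int X_s(dx)\phi(x)\al(x,y)$ uniformly bounded in $s\leq N$ by \eqref{LLN Limit_initial}, so it is controlled exactly as the non-interaction pieces. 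The first summand is precisely the object treated in Lemmas \ref{lem:estimateInteraction} and \ref{lem:finiteness}: for $A$ large enough these give
\[
\sup_{n\geq 1}\sup_{s\leq N}\Ebb{\lbrb{\int_{\Xc}\phi(x)Y^n_s(dx)\int_{\Xc}\al(x,y)X^n_s(dy)}^2}<\infty,
\]
whence Cauchy--Schwarz yields an $L^1$-bound on $[S,T]$ of order $\de$. The main obstacle is indeed this interaction term, because the $\sqrt{n}$ factor rules out any naive bounding and forces us to rely on both the linearisation above and the truncation argument of Lemma \ref{lem:finiteness} to guarantee that the relevant second moment stays uniformly bounded in $n$. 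Once these ingredients are assembled, Aldous--Rebolledo furnishes tightness of $\{\langlerangle{Y^n_\cdot,\phi}\}_{n\geq 1}$ in $\D([0,\infty),\R)$.
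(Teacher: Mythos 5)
Your proposal is correct and follows essentially the same route as the paper: Aldous--Rebolledo applied to the semimartingale decomposition from Proposition \ref{Martingale BPDL Rescaled}, with the martingale increment controlled through the quadratic variation \eqref{martingale BPDL rescaled_variation} and Lemma \ref{Moment estimate X_lemma}, the non-interaction drift terms through Lemma \ref{Moment estimate Y_lemma}, and the $\sqrt{n}$-prefactored interaction term linearised as $Y^n_s\otimes X^n_s+X_s\otimes Y^n_s$ and handled via Lemmas \ref{lem:estimateInteraction} and \ref{lem:finiteness}. The only cosmetic difference is that you pass through an $L^1$ bound by Cauchy--Schwarz where the paper bounds the squared increment directly and applies Chebyshev; the underlying second-moment estimates are identical.
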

\begin{proof}  Since $\{\langle Y_{\cdot}^n, \phi\rangle;n\geq1\}$ is a sequence of semimartingales, we verify the tightness criteria given by Aldous \cite{Aldous} and Rebolledo (see, e.g., Etheridge\cite[Theorem 1.17]{Etheridge_introductory}).\\
For any fixed $t>0$,\, $\{\langle Y_t^n, \phi\rangle;n\geq1\}$ is tight due to Lemma \ref{Moment estimate Y_lemma}.\\
To the end, we will prove the tightness criterion of the finite variation part (say $A_t^n$) and the quadratic variation of the martingale part $M_t^n(\phi)$ of $\{\langle Y_{\cdot}^n, \phi\rangle;n\geq1\}$, respectively.\\
For any $\varepsilon>0$ and $T>0$, given a sequence of stopping time $\tau_n$ bounded by T. W.O.L.G., assume $\|\phi\|_{\infty}\leq 1$. As for the finite variation part $A_t^n$ of \eqref{martingale BPDL rescaled_Y^n}, we have
\beq\label{tightness drift part}
\bea
\sup_{n\geq1}\sup_{\theta\in[0,\de]}\PP\Big[\Big|A_{\tau_n+\theta}^{(n)}-&A_{\tau_n}^{(n)}\Big|>\varepsilon\Big]\\
&\leq\frac{1}{\varepsilon^2}\sup\limits_{n\geq1}\sup\limits_{\theta\in[0,\de]}\E\Big(A_{\tau_n+\theta}^{(n)}-A_{\tau_n}^{(n)}\Big)^2\\
&\stackrel{\text{H}\ddot{o}\text{lder}}{\leq}\frac{\delta}{\varepsilon^2}\sup_{n\geq1}\sup_{\theta\in[0,\de]}\int_{\tau_n}^{\tau_n+\theta}
\E\Big\{\int_{\X}Y_s^n(dx)b(x)\int_{\R^d}\phi(x+z)D(x,dz)\\
&\qquad-\int_{\X}Y_s^n(dx)d(x)\phi(x)\\
&\qquad-\int_{\X}Y_s^n(dx)\phi(x)\int_{\X}\al(x,y)X_s^n(dy)\\
&\qquad-\int_{\X}X_s(dx)\phi(x)\int_{\X}\al(x,y)Y_s^n(dy)\Big\}^2ds.
\eea
\eeq
We use the same estimates as before and apply Lemma \ref{lem:estimateInteraction} to the term before the last to easily get that
\beq
\bea
&\leq\frac{2\de\bar{b}^2}{\varepsilon^2}\cdot\sup\limits_{n\geq1}\int_0^T\E\sup\limits_{0\leq u\leq T}\langle Y_u^n,  ,\frac{ b(\cdot)}{\bar{b}}\int_{\X}\phi(\cdot+z)D(\cdot, dz)\rangle^2 ds\\
&\qquad+\frac{2\de\bar{d}^2}{\varepsilon^2}\cdot\sup\limits_{n\geq1}\int_0^T\E\sup\limits_{0\leq u\leq T}\langle Y_u^n,  \frac{d(\cdot)}{\bar{d}}\phi(\cdot)\rangle^2 ds\\
&\qquad+\frac{2\de\bar{\al}^2C}{\varepsilon^2}\cdot\lbrb{\sup\limits_{n\geq1}\sup_{||\phi||_\infty\leq 1}\int_0^T\E\sup\limits_{0\leq u\leq T}\langle Y_u^n,  \phi\rangle^2 ds+D}\\
&\leq \de TC_T^{(2)}C,
\eea
\eeq
where $C$ changes from line to line.
On the other hand, from \eqref{martingale BPDL rescaled_variation}, we have that
\beq\label{tightness martingale part}
\bea
&\sup_{n\geq1}\sup_{\theta\in[0,\de]}\PP\Big[\Big|\langle M_{\cdot}^n(\phi)\rangle_{\tau_n+\theta}-\langle M_{\cdot}^n(\phi)\rangle_{\tau_n}\Big|>\varepsilon\Big]\\
&\leq \frac{\de(\bar{b}+\bar{d})}{\varepsilon}\cdot\sup\limits_{n\geq1}\E\sup\limits_{0\leq u\leq T}\langle X_u^n,  1\rangle ds\\&\qquad+\frac{\de\bar{\al}}{\varepsilon}\cdot\sup\limits_{n\geq1}\E\sup\limits_{0\leq u\leq T}\langle X_s^n,  1\rangle^2 ds\\
&\leq \de (C_T^{(0)}+C_T^{(1)})C.
\eea
\eeq
According to the moment estimates results in Lemma \ref{Moment estimate X_lemma} and Lemma \ref{Moment estimate Y_lemma}, both inequalities \eqref{tightness drift part} and \eqref{tightness martingale part} can be less than $\varepsilon$ if we take $\de$ (which only depends on $T, \varepsilon, \|\phi\|_{\infty}$) small enough, i.e.
\[
\bea
\sup_{n\geq1}\sup_{\theta\in[0,\de]}\PP\Big[\Big|A_{\tau_n+\theta}^{(n)}-&A_{\tau_n}^{(n)}\Big|>\varepsilon\Big]<\varepsilon,\\
\sup_{n\geq1}\sup_{\theta\in[0,\de]}\PP\Big[\Big|\langle M_{\cdot}^n(\phi)\rangle_{\tau_n+\theta}-&\langle M_{\cdot}^n(\phi)\rangle_{\tau_n}\Big|>\varepsilon\Big]<\varepsilon,
\eea
\]
which fulfil the Aldous-Rebolledo tightness condition.
\end{proof}

\subsection{Convergence of finite dimensional distributions}
In this section, we prove a weak limit  of $\{(Y_t^n)_{t\geq 0};n\geq1\}$ in the sense of f.d.d. convergence is a solution of some martingale problem.

\begin{prop}\label{F.D.D limit}
  Under the conditions given in Theorem $\ref{Theorem CLT}$, the finite dimensional distributions of $(Y_t^n)_{t\geq 0}$ converge as $n\to\infty$ to those of a $\mathcal{S}'(\X)$-valued Markov process $(Y_t)_{t\geq0}$ satisfying that for $\phi\in\mathcal{S}(\R^d)$, the process
 \beq\label{f.d.d limit_martingale}
\bea
M_t(\phi):=&\langle Y_t,\phi\rangle-\langle \gamma,\phi\rangle-\int_0^t\Big\langle Y_s,\, b(\cdot)\int_{\R^d}\phi(\cdot+z)D(\cdot,dz)\Big\rangle ds\\
&+\int_0^t\Big\langle Y_s,\, d(\cdot)\phi(\cdot)\Big\rangle ds\\
&+\int_0^t\Big\langle Y_s,\, \int_{\X}\al(x,\cdot)\phi(x)X_s(dx)\Big\rangle ds\\
&+\int_0^t\Big\langle Y_s, \, \phi(\cdot)\int_{\X}\al(\cdot,y)X_s(dy)\Big\rangle ds\\.
\eea
 \eeq
 is a continuous martingale with quadratic variation
\beq\label{f.d.d limit_martingale quadratic variation}
\bea
\langle M_{\cdot}(\phi)\rangle_t=&\int_o^tds\int_{\X}X_s(dx)b(x)\int_{\R^d}\phi^2(x+z)D(x,dz)\\
                        &+\int_0^tds\int_{\X}X_s(dx)\phi^2(x)\Big(d(x)+\int_{\X}\al(x,y)X_s(dy)\Big).
\eea
\eeq
\end{prop}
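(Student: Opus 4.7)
The plan is to derive the asserted martingale problem by linearizing the prelimit equation \eqref{martingale BPDL rescaled_Y^n} around the deterministic LLN curve $X_s$, then passing to the limit subsequentially using the tightness and moment bounds already in hand. First, substituting $X^n_s = X_s + Y^n_s/\sqrt{n}$ into the bilinear interaction term of \eqref{martingale BPDL rescaled_Y^n}, I would rewrite
\begin{align*}
&\sqrt{n}\int_\Xc X^n_s(dx)\phi(x)\int_\Xc \al(x,y)X^n_s(dy)-\sqrt{n}\int_\Xc X_s(dx)\phi(x)\int_\Xc \al(x,y)X_s(dy) \\
&= \int_\Xc Y^n_s(dx)\phi(x)\int_\Xc \al(x,y)X_s(dy)+\int_\Xc X_s(dx)\phi(x)\int_\Xc \al(x,y)Y^n_s(dy)+R^n_s(\phi),
\end{align*}
with quadratic residual $R^n_s(\phi)=n^{-1/2}\int_\Xc \phi(x)Y^n_s(dx)\int_\Xc \al(x,y)Y^n_s(dy)$. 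Thus the prelimit equation already has the shape of \eqref{f.d.d limit_martingale} up to the additive error $\int_0^t R^n_s(\phi)\,ds$. Using the factorization $\al(x,y)=\sum_{i=1}^{m^d}f_i(x)g_i(y)$ from (A1), one has $R^n_s(\phi)=n^{-1/2}\sum_i\langlerangle{\phi f_i,Y^n_s}\langlerangle{g_i,Y^n_s}$, so by Cauchy--Schwarz and Lemma \ref{Moment estimate Y_lemma}, $\Eb\sup_{t\leq T}|\int_0^t R^n_s(\phi)ds|=O(n^{-1/2})\to 0$.

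Second, by Proposition \ref{Tightness} applied to arbitrary finite linear combinations and the Cram\'er--Wold device, the vector processes $(\langle Y^n_\cdot,\phi_1\rangle,\dots,\langle Y^n_\cdot,\phi_k\rangle)$ are tight in $\D([0,\infty),\R^k)$ for any $\phi_1,\dots,\phi_k\in\mathcal{S}(\R^d)$. Along a weakly convergent subsequence with limit $Y$, the four linear drift terms in $Y^n_s$ converge by the continuous mapping theorem, since the kernels $\int_\Xc\al(x,\cdot)\phi(x)X_s(dx)$ and $\phi(\cdot)\int_\Xc \al(\cdot,y)X_s(dy)$ lie in $\Bc(\Xc)$ uniformly for $s\in[0,T]$ thanks to the deterministic mass bound \eqref{LLN Limit_initial}. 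By Theorem \ref{Theorem LLN} together with dominated convergence, the prelimit quadratic variation \eqref{martingale BPDL rescaled_variation} converges to the expression in \eqref{f.d.d limit_martingale quadratic variation}. Uniform integrability of $\{M^n_t(\phi)^2\}_n$ is supplied by the uniform second-moment control of the quadratic variation via Lemma \ref{Moment estimate X_lemma}, so the martingale property passes to the limit and identifies $M_t(\phi):=\langle Y_t,\phi\rangle-\langle\gamma,\phi\rangle-\int_0^t(\cdots)ds$ as a square-integrable martingale with the asserted quadratic variation.

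For continuity of $M(\phi)$, single jumps of $\nu^n_t$ have size one so single jumps of $\langle Y^n_t,\phi\rangle$ are bounded by $\|\phi\|_\infty/\sqrt{n}$; hence $\sup_{t\leq T}|\Delta M^n_t(\phi)|\to 0$ and any weak limit has continuous paths. For uniqueness of the martingale problem \eqref{f.d.d limit_martingale}--\eqref{f.d.d limit_martingale quadratic variation}, observe that the drift defines a bounded linear operator on $Y_s$ with deterministic time-dependent kernels and that the quadratic variation is deterministic, so a standard $L^2$-Gronwall argument on the pairing of two candidate solutions against any $\phi\in\mathcal{S}(\R^d)$ yields uniqueness in law. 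This in turn forces the whole sequence (not just subsequences) to converge, establishing the f.d.d.\ convergence and the Markov property of $Y$.

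The main obstacle is the interplay between the $\sqrt{n}$-scaled interaction term and the quadratic residual $R^n_s(\phi)$: showing the latter vanishes is precisely why (A1) imposes the separable structure $\al(x,y)=\sum f_i(x)g_i(y)$. Via this decomposition (as exploited in Lemma \ref{lem:estimateInteraction}), the bilinear form in $Y^n_s$ splits into a finite sum of products of linear functionals, each controlled by Lemma \ref{Moment estimate Y_lemma}. Without this structural hypothesis, a genuinely bilinear $L^2$-control of $\int\phi(x)Y^n_s(dx)\int \al(x,y)Y^n_s(dy)$ would be required, which the available second-moment bounds on $Y^n$ alone do not deliver uniformly in $n$.
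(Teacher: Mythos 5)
Your proposal is correct in substance and follows the paper's own strategy for the drift and the tightness, but takes a genuinely different route for the quadratic variation. For the drift, your linearization $X^n_s=X_s+Y^n_s/\sqrt n$ with quadratic residual $R^n_s(\phi)=n^{-1/2}\int\phi(x)Y^n_s(dx)\int\al(x,y)Y^n_s(dy)$ is exactly the paper's comparison of $M^n_t(\phi)$ with the auxiliary process $\widetilde M^n_t(\phi)$; the paper bounds the residual by $n^{-1/2}\bar\al T\|\phi\|_\infty C_T^{(2)}$, while you route it through the separable form of $\al$ and Cauchy--Schwarz, which is arguably the more careful version of the same estimate (the paper's direct bound on a bilinear form in the signed measure $Y^n_s$ really does lean on Lemma \ref{lem:estimateInteraction}, as you observe). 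Your Cram\'er--Wold step plays the role of the paper's appeal to Mitoma's theorem. Where you diverge is the identification of $\langle M_\cdot(\phi)\rangle_t$: the paper expands $\langle Y^n_t,\phi\rangle^2=n[\langle X^n_t,\phi\rangle^2-2\langle X^n_t,\phi\rangle\langle X_t,\phi\rangle+\langle X_t,\phi\rangle^2]$, substitutes the It\^o decompositions of $\langle X^n_t,\phi\rangle$ and $\langle X^n_t,\phi\rangle^2$, and after a page of integration by parts compares the result with the It\^o expansion of $\langle Y_t,\phi\rangle^2$; you instead pass the prelimit bracket \eqref{martingale BPDL rescaled_variation} to the limit directly using $X^n\to X$ and uniform integrability. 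Your route is shorter and more standard, but note that uniform integrability of $\{M^n_t(\phi)^2\}_n$ is not quite ``supplied by the uniform second-moment control via Lemma \ref{Moment estimate X_lemma}'': one needs a $(2+\epsilon)$-moment of $M^n_t(\phi)$, hence via Burkholder--Davis--Gundy a $(1+\epsilon/2)$-moment of $\langle M^n(\phi)\rangle_t$, which involves moments of $\langle X^n_s,1\rangle$ beyond the second; these are available from the $(4+\delta)$ initial condition \eqref{moment condition} by the same large-deviation estimates as in Lemmas \ref{lem:largedeviation}--\ref{lem:finiteness}, but the step deserves to be said (the paper's ``obviously $D^{t,n,1}$ and $D^{t,n,2}$ converge'' glosses over a comparable point). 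Finally, your closing Gronwall argument for uniqueness is not part of the paper's proof of this proposition; the paper deliberately defers uniqueness to the Langevin-equation characterization in Theorem \ref{Langevin equation}, so that part of your write-up is an added bonus rather than a required step here.
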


\begin{proof}  By Proposition \ref{Tightness}, we already proved $\{\langle Y_{\cdot}^n, \phi\rangle;n\geq1\}$ is tight in $\D([0,\infty),\R)$ for any $\phi\in\mathcal{S}(\R^d)$. Following Mitoma \cite{Mitoma} (see e.g., Ethier and Kurtz \cite[Theorem 3.9.1]{Ethier_Kurtz}), we conclude that the sequence $\{(Y_t^n)_{t\geq 0};n\geq1\}$ is tight in $\D([0,\infty),\mathcal{S}'(\R^d))$.
Hence, we can assume there exists a weak limit $(Y_t)_{t\geq0}$ of a subsequence of  $\{(Y_t^n)_{t\geq 0}; n\geq1\}$.
Since $Y^n_t\in\M_F(\X)$, then $\langlerangle{Y^n_t, \phi}=0$ for any $\phi\in\mathcal{S}(\X^c)$. Therefore, we have $Y_t\in \mathcal{S}'(\X)$.  

Firstly, we check that $(Y_t)_{t\geq 0}$ is a.s. continuous. By the construction of $(Y_t^n)$, we have
\beq
\bea
\sup\limits_{t\in[0,T]}\sup\limits_{\|\phi\|\leq 1}|\langle Y_t^n, \phi\rangle-\langle Y_{t-}^n, \phi\rangle|&\leq\sup\limits_{t\in[0,T]}\sup\limits_{\|\phi\|\leq 1}\sqrt{n}\big\{|\langle X_t^n, \phi\rangle-\langle X_{t-}^n, \phi\rangle|+|\langle X_t-X_{t-}, \phi\rangle|\big\}\\
&\leq\sqrt{n}\frac{1}{n}+0\\
&=\frac{1}{\sqrt{n}}.
\eea
\eeq
By letting $n\to\infty$, it implies the continuity of $(Y_t)_{t\geq0}$, i.e. $(Y_t)_{t\geq0}\in\mathbb{C}([0,+\infty), \mathcal{S}'(X))$.

To prove ($M_t(\phi))_{t\geq0}$ is a martingale, it suffices to prove that
\beq
\E [M_t(\phi)] =0.
\eeq
Let
\beq
\bea
\widetilde{M}_t^n(\phi):=&\langle Y_t^n,\phi\rangle-\langle Y_0^n,\phi\rangle-\int_0^tds\int_{\X}Y_s^n(dx)b(x)\int_{\R^d}\phi(x+z)D(x,dz)\\
&+\int_0^tds\int_{\X}d(x)\phi(x)Y_s^n(dx)\\
&+\int_0^tds\int_{\X}X_s(dx)\phi(x)\int_{\X}\al(x,y)Y_s^n(dy)\\
&+\int_0^tds\int_{\X}Y_s^n(dx)\phi(x)\int_{\X}\al(x,y)X_s(dy).
\eea
 \eeq
Then, for fixed $t>0$ and any $n\in\N$, we have
 \beq\label{f.d.d limit_martingale proof}
 |\E [M_t(\phi)]|\leq |\E [M_t^n(\phi)-\widetilde{M}_t^n(\phi)]|+|\E [\widetilde{M}_t^n(\phi)-M_t(\phi)]|+|\E [M_t^n(\phi)]|.
 \eeq
According to Proposition \ref{Martingale BPDL Rescaled}, we have $\E [M_t^n(\phi)]=0$. \\
Since $\{(Y_t^n)_{t\geq 0};n\geq1\}$ converges in law to $(Y_t)_{t\geq0}$ as $n\to\infty$ and $\big(\widetilde{M}_t^n(\phi)-M_t(\phi)\big)$ is homogeneous w.r.t. $\big(Y_t^n-Y_t\big)$, we get
\begin{equation}
\lim\limits_{n\to\infty}|\E [\widetilde{M}_t^n(\phi)-M_t(\phi)]|=0.
\end{equation}
As for the first term on RHS of \eqref{f.d.d limit_martingale proof},
\beq
\bea
|\E [M_t^n(\phi)&-\widetilde{M}_t^n(\phi)]|\\
&=\Bigg |\E\Bigg\{\sqrt{n}\int_0^tds\int_{\X}\Big(\frac{Y_s^n(dx)}{\sqrt{n}}+X_s(dx)\Big)\phi(x)\int_{\X}\al(x,y)\Big(\frac{Y_s^n(dy)}{\sqrt{n}}+X_s(dy)\Big)\\
&\qquad-\sqrt{n}\int_0^tds\int_{\X}X_s(dx)\phi(x)\int_{\X}\al(x,y)X_s(dy)\\
&\qquad-\int_0^tds\int_{\X}X_s(dx)\phi(x)\int_{\X}\al(x,y)Y_s^n(dy)\\
&\qquad-\int_0^tds\int_{\X}Y_s^n(dx)\phi(x)\int_{\X}\al(x,y)X_s(dy)\Bigg\}\Bigg |\\
&\leq \frac{1}{\sqrt{n}}\Bigg|\E\int_0^tds\int_{\X}Y_s^n(dx)\phi(x)\int_{\X}\al(x,y)Y_s^n(dy)\Bigg|\\
&\leq\frac{1}{\sqrt{n}}\bar{\al}T\|\phi\|_{\infty}C_t^{(2)}\\
&\stackrel{n\to\infty}{\longrightarrow}0,
\eea
\eeq
where $C_t^{(2)}$ is determined as in Lemma \ref{Moment estimate Y_lemma}.\\
By combining the above estimates together, we conclude $|\E [M_t(\phi)]| =0.$

In the remainder, we will justify that the quadratic variation of $M_t(\phi)$ has the form \eqref{f.d.d limit_martingale quadratic variation}.\\
By applying It$\hat{o}$'s formula to $\langle Y_t,\phi\rangle^2$, according to the  semimartingale decomposition \eqref{f.d.d limit_martingale} of $\langle Y_t,\phi\rangle$, we have
\beq\label{f.d.d limit_variation proof_Ito of square}
\bea
\langle Y_t,\phi\rangle^2
&=\langle \gamma,\phi\rangle^2+2\int_0^t \langle Y_s,\phi\rangle d[\langle Y_s,\phi\rangle]+\langle M_.(\phi)\rangle_t\\
&=\langle \gamma,\phi\rangle^2+\langle M_.(\phi)\rangle_t\\
&\quad+2\int_0^t\langle Y_s,\phi\rangle ds\Big\{ \int_{\X}Y_s(dx)b(x)\int_{\R^d}\phi(x+z)D(x,dz)-\int_{\X}Y_s(dx)d(x)\phi(x)\\
&\quad-\int_{\X}Y_s(dx)\phi(x)\int_{\X}\al(x,y)X_s(dy)-\int_{\X}X_s(dx)\phi(x)\int_{\X}\al(x,y)Y_s(dy)\Big\}\\
&\quad+\text{martingale}.
\eea
\eeq

On the other hand, according to the definition of $(Y_t^n)$, we have

\beq\label{f.d.d limit_variation proof_square decomposition}
\bea
\langle Y_t^n, \phi\rangle^2&=\langle \sqrt{n}(X_t^n-X_t), \phi\rangle^2\\
&=n\Big[\langle X_t^n, \phi\rangle^2-2\langle X_t^n,\phi\rangle\langle X_t,\phi\rangle+\langle X_t,\phi\rangle^2\Big].
\eea
\eeq
To simplify the computations, let us introduce new notation:
\beq\label{f.d.d limit_variation proof_notation}
\bea
A(s)&:=\int_{\X}X_s(dx)\Big[b(x)\int_{\R^d}\phi(x+z)D(x,dz)-\phi(x)\big(d(x)+\int_{\X}\alpha(x,y)X_s(dy)\big)\Big],\\
B^n(s)&:=\int_{\X}X_s^n(dx)\Big[b(x)\int_{\R^d}\phi(x+z)D(x,dz)-\phi(x)\big(d(x)+\int_{\X}\alpha(x,y)X_s^n(dy)\big)\Big].
\eea
\eeq
From \eqref{martingale for X}, \eqref{martingale for X^2} and \eqref{LLN limit}, respectively, it follows that
\beq
\bea
\langle X_t^n,\phi\rangle&=\langle X_0^n,\phi\rangle+\int_0^t B^n(s)ds+\text{martingale},\\
\langle X_t^n,\phi\rangle^2&=\langle X_0^n,\phi\rangle^2+2\int_0^t\langle X_s^n,\phi\rangle B^n(s)ds+\frac{1}{n}\int_0^tds\int_{\X}X_s^n(dx)\\&\quad\Big[b(x)\int_{\R^d}\phi^2(x+z)D(x,dz)
+\phi^2(x)\big(d(x)+\int_{\X}\alpha(x,y)X_s^n(dy)\big)\Big]+\text{martingale},\\
\langle X_t,\phi\rangle&=\langle X_0,\phi\rangle+\int_0^t A(s)ds.
\eea
\eeq
By substituting every term above into \eqref{f.d.d limit_variation proof_square decomposition}, we have that
\beq\label{f.d.d limit_variation proof_fist formula}
\bea
\langle Y_t^n,\phi\rangle^2&=n\langle X_0^n,\phi\rangle^2+\int_0^tds\int_{\X}X_s^n(dx)\Big[b(x)\int_{\R^d}\phi^2(x+z)D(x,dz)\\
&\quad+\phi^2(x)\big(d(x)+\int_{\X}\alpha(x,y)X_s^n(dy)\big)\Big]+2n\int_0^t\langle X_s^n,\phi\rangle B^n(s)ds\\
&\quad-2n\Big[\langle X_0^n,\phi\rangle+\int_0^t B^n(s)ds\Big]\Big[\langle X_0,\phi\rangle+\int_0^t A(s)ds\Big]\\
&\quad +n\Big[\langle X_0,\phi\rangle+\int_0^t A(s)ds\Big]^2+\text{martingale}.
\eea
\eeq
Set
\beq
D^{t,n,1}:=\int_0^tds\int_{\X}X_s^n(dx)\Big[b(x)\int_{\R^d}\phi^2(x+z)D(x,dz)+\phi^2(x)\big(d(x)+\int_{\X}\alpha(x,y)X_s^n(dy)\big)\Big].
\eeq
By combining all the quadratic terms at time 0 in \eqref{f.d.d limit_variation proof_fist formula} together, it follows that
\beq
\bea
\eqref{f.d.d limit_variation proof_fist formula}=&n\langle X_0^n-X_0, \phi\rangle^2+D^{t,n,1}\\
&+2n\int_0^t\langle X_s^n, \phi\rangle B^n(s)ds-2n\langle X_t, \phi\rangle\int_0^t B^n(s)ds\\
&-2n\langle X_0^n, \phi\rangle\int_0^tA(s)ds+2n\langle X_0, \phi\rangle\int_0^tA(s)ds\\
&+n\Big[\int_0^tA(s)ds\Big]^2+\text{martingale}
\eea
\eeq
\beq
\bea
=&\langle Y_0^n, \phi\rangle^2+D^{t,n,1}\\
&+2n\int_0^t\langle \frac{Y^n_s}{\sqrt{n}}+X_s, \phi\rangle B^n(s)ds-2n\langle X_t, \phi\rangle\int_0^t B^n(s)ds\\
&-2\sqrt{n}\langle Y_0^n, \phi\rangle\int_0^tA(s)ds+n\Big[\int_0^tA(s)ds\Big]^2+\text{martingale}
\eea
\eeq
\beq
\bea
=&\langle Y_0^n, \phi\rangle^2+D^{t,n,1}\\
&+2\sqrt{n}\int_0^t\langle Y^n_s, \phi\rangle B^n(s)ds\\
&+2n\int_0^t\langle X_s, \phi\rangle B^n(s)ds-2n\langle X_t, \phi\rangle\int_0^t B^n(s)ds\\
\quad&-2\sqrt{n}\langle Y_0^n, \phi\rangle\int_0^tA(s)ds+n\Big[\int_0^tA(s)ds\Big]^2+\text{martingale}
\eea
\eeq
\beq
\bea
\stackrel{\text{Integration by parts}}{=}&\langle Y_0^n, \phi\rangle^2+D^{t,n,1}\\
&+2\sqrt{n}\int_0^t\langle Y^n_s, \phi\rangle B^n(s)ds\\
&-2n\int_0^tdsA(s)\int_0^s B^n(r)dr-2\sqrt{n}\langle Y_0^n, \phi\rangle\int_0^tA(s)ds\\
&+n\Big[\int_0^tA(s)ds\Big]^2+\text{martingale}
\eea
\eeq
\beq\label{f.d.d limit_variation proof_middle formula}
\bea
\stackrel{\text{Replace}\, B^n(s)\, \text{by} \eqref{f.d.d limit_variation proof_notation}}{=}&\langle Y_0^n, \phi\rangle^2+D^{t,n,1}\\
&+2\int_0^t\langle Y_s^n,\phi\rangle ds\Big\{ \int_{\X}Y_s^n(dx)\Big[b(x)\int_{\R^d}\phi(x+z)D(x,dz)-\phi(x)\Big(d(x)\\
&\quad+\int_{\X}\al(x,y)X_s^n(dy)\Big)\Big]-\int_{\X}X_s(dx)\phi(x)\int_{\X}\al(x,y)Y_s^n(dy)\Big\}\\
&+2\sqrt{n}\int_0^t\langle Y^n_s, \phi\rangle A(s)ds\\
&-2n\int_0^tdsA(s)\int_0^s B^n(r)dr-2\sqrt{n}\langle Y_0^n, \phi\rangle\int_0^tA(s)ds\\
&+n\Big[\int_0^tA(s)ds\Big]^2+\text{martingale}.
\eea
\eeq
Set
\beq
\bea
D^{t,n,2}&:=2\int_0^t\langle Y_s^n,\phi\rangle ds\Big\{ \int_{\X}Y_s^n(dx)\Big[b(x)\int_{\R^d}\phi(x+z)D(x,dz)-d(x)\phi(x)\\
&\quad-\phi(x)\int_{\X}\al(x,y)X_s^n(dy)\Big]-\int_{\X}X_s(dx)\phi(x)\int_{\X}\al(x,y)Y_s^n(dy)\Big\}.\\
\eea
\eeq
Replacing $\langle Y^n_s, \phi\rangle$ and $B^n(r)$ by \eqref{martingale BPDL rescaled_Y^n} and \eqref{f.d.d limit_variation proof_notation} respectively, one obtains that
\beq
\bea
\eqref{f.d.d limit_variation proof_middle formula}
=&\langle Y_0^n, \phi\rangle^2+D^{t,n,1}+D^{t,n,2}\\
&+2\sqrt{n}\int_0^tdsA(s)\int_0^sdr\int_{\X}Y_r^n(dx)\Big[b(x)\int_{\R^d}\phi(x+z)D(x,dz)-d(x)\phi(x)\Big]\\
&-2n\int_0^tdsA(s)\int_0^sdr\int_{\X}X_r^n(dx)\phi(x)\int_{\X}\al(x,y)X_r^n(dy)\\
&+2n\int_0^tdsA(s)\int_0^sdr\int_{\X}X_r(dx)\phi(x)\int_{\X}\al(x,y)X_r(dy)\\
&-2n\int_0^tdsA(s)\int_0^sdr\int_{\X}X_r^n(dx)\Big[b(x)\int_{\R^d}\phi(x+z)D(x,dz)\\
&\quad-\phi(x)\big(d(x)+\al(x,y)X_r^n(dy)\big)\Big]
+n\Big[\int_0^tA(s)ds\Big]^2+\text{martingale}
\eea
\eeq
\beq
\bea
=&\langle Y_0^n, \phi\rangle^2+D^{t,n,1}+D^{t,n,2}\\
&+2\sqrt{n}\int_0^tdsA(s)\int_0^sdr\int_{\X}Y_r^n(dx)\Big[b(x)\int_{\R^d}\phi(x+z)D(x,dz)-d(x)\phi(x)\Big]\\
&+2n\int_0^tdsA(s)\int_0^sdr\int_{\X}X_r(dx)\phi(x)\int_{\X}\al(x,y)X_r(dy)\\
&-2n\int_0^tdsA(s)\int_0^sdr\int_{\X}X_r^n(dx)\Big[b(x)\int_{\R^d}\phi(x+z)D(x,dz)-d(x)\phi(x)\Big]\\
&+n\Big[\int_0^tA(s)ds\Big]^2+\text{martingale}.
\eea
\eeq
Recombining $X_r^n,\,X_r$ and $Y_r^n$, it thus follows
\beq
\bea
\langle Y_t^n, \phi\rangle^2=&\langle Y_0^n, \phi\rangle^2+D^{t,n,1}+D^{t,n,2}\\
&-2n\int_0^tdsA(s)\int_0^sA(r)dr\\
&+n\Big[\int_0^tA(s)ds\Big]^2+\text{martingale}\\
\stackrel{\text{Integration by parts}}{=}&\langle Y_0^n, \phi\rangle^2+D^{t,n,1}+D^{t,n,2}+\text{martingale}.
\eea
\eeq
Obviously, both $D^{t,n,1}$ and $D^{t,n,2}$ converge as $n\to\infty$.

Finally, we get that
\beq\label{f.d.d limit_variation proof_limit of square}
\bea
\langle Y_t,\phi\rangle^2
&=\langle Y_0,\phi\rangle^2+\int_o^tds\int_{\X}X_s(dx)b(x)\int_{\R^d}\phi^2(x+z)D(x,dz)\\
&\qquad+\int_0^tds\int_{\X}X_s(dx)\phi^2(x)\Big(d(x)+\int_{\X}\al(x,y)X_s(dy)\Big)\\
&\qquad+2\int_0^t\langle Y_s,\phi\rangle ds\Big\{ \int_{\X}Y_s(dx)\Big[b(x)\int_{\R^d}\phi(x+z)D(x,dz)-d(x)\phi(x)\\
&\qquad-\phi(x)\int_{\X}\al(x,y)X_s(dy)\Big]-\int_{\X}X_s(dx)\phi(x)\int_{\X}\al(x,y)Y_s(dy)\Big\}\\
&\qquad+\text{martingale}.
\eea
\eeq
By comparing the representations of \eqref{f.d.d limit_variation proof_Ito of square} and \eqref{f.d.d limit_variation proof_limit of square}, we conclude that
\beq
\bea
\langle M_.(\phi)\rangle_t&=\int_o^tds\int_{\X}X_s(dx)b(x)\int_{\R^d}\phi^2(x+z)D(x,dz)\\
&\qquad+\int_0^tds\int_{\X}X_s(dx)\phi^2(x)\Big(d(x)+\int_{\X}\al(x,y)X_s(dy)\Big).
\eea
\eeq
\end{proof}
\subsection{Uniqueness of the martingale problem}
Instead of proving the uniqueness of the solution of the limiting martingale problem directly, we associate it with the solution of a corresponding generalized Langevin equation which will be shown in the next section. We will show that the Langevin equation has an unique solution (see Theorem \ref{Langevin equation}).

\section{Links with generalized Langevin equations}\label{section four}
A criterion for an infinite-dimensional Gaussian process (distribution-valued process) to satisfy a generalized Langevin equation is given in \cite{Boj_Gor}, where both of the evolution term and the white noise term are time inhomogeneous. In this section, we apply the criterion to our fluctuation limit obtained in the previous section.

\begin{Def}
An $\mathcal{S}'(\R^d)$-valued process $\{W_t; t\in\R^+\}$ is called (centered) Gaussian if $\{\langle W_t,\phi\rangle; t\in \R^+,\phi\in\mathcal{S}(\R^d)\}$ is a (centered) Gaussian system. We say that $\{W_t; t\in\R^+\}$ is a $\mathcal{S}'(\X)$-valued process if $\langle W_t,\phi\rangle\equiv 0$, for any $\phi\in\mathcal{S}(\X^c)$. 
\end{Def}
\begin{Def}\label{Wiener process_Def}
A centered Gaussian $\mathcal{S}'(\R^d)$-valued process $W=\{W_t; t\in\R^+\}$ is called a generalized Wiener process if it has continuous paths and its covariance functional $C(s,\phi;t,\psi):=\E[\langle W_s, \phi\rangle\langle W_t,\phi\rangle]$ has the form
\beq\label{eq:covariance} C(s,\phi;t,\psi)=\int_0^{s\wedge t}\langle Q_u\phi,\psi\rangle du, \qquad\qquad s,t\in \R^+, \phi,\psi\in\mathcal{S}(\R^d),
\eeq
where the operators $Q_u: \mathcal{S}(\R^d)\to\mathcal{S}'(\R^d)$ have the following properties:
\begin{enumerate}
  \item  $Q_u$ is linear, continuous, symmetric and positive for each $u\in\R^+$,
  \item  the function $u\to\langle Q_u\phi,\psi\rangle$ is right continuous with left limit for each $\phi,\psi\in\mathcal{S}(\R^d)$.
\end{enumerate}
We then say that $W$ is associated to $Q$.
\end{Def}
Let's remind that we inherit the same notation as in Section \ref{section two} and Section \ref{section three}.\\
Define $Q_t\phi\in\mathcal{S}'(\R^d)$ for any $\phi\in\mathcal{S}(\R^d)$ and $t\in\R^+$ by
\beq\label{langevin equation_covariance measure}
\bea
\langle Q_t\phi,\psi\rangle:=\int_{\X}X_t(dx)&\Bigg[b(x)\int_{\Xc-x}\phi(x+z)\psi(x+z)D(x,dz)\\
                        &+\phi(x)\psi(x)\Big(d(x)+\int_{\X}\al(x,y)X_t(dy)\Big)\Bigg], \qquad \text{for}~ \psi\in\mathcal{S}(\R^d).
\eea
\eeq
Recall the quadratic variation form of $M_t(\phi)$ in \eqref{CLT variation}. It follows from a direct fact that $\langle M_.(\phi)\rangle_t=\int_0^t\langle Q_u\phi,\phi\rangle du$.
Then, we have
\begin{teor}\label{Langevin equation}
The fluctuation limit process $(Y_t)_{t\geq0}$ obtained in Theorem \ref{Theorem CLT} is the unique solution of a time inhomogeneous Langevin equation
\beq\label{langevin equation_infinite dim}
\left\{
  \begin{array}{ll}
    dY_t= A_t^*Y_tdt+dW_t, & \qquad t>0 \\
    Y_0 = \gamma &
  \end{array}
\right.,
\eeq
where $A_t^*$ denotes the adjoint operator of $A_t$ defined by
\beq\label{langevin equation_drift}
\bea
A_t\phi(x)=b(x)\int_{\Xc}&\phi(x+z)D(x,dz)-\phi(x)\big(d(x)+\int_\X\al(x,y)X_t(dy)\big)\\
&-\int_{\X}\al(y,x)\phi(y)X_t(dy),
\eea
\eeq
and $(W_t)_{t\geq0}$ is an $\mathcal{S}'(\X)$-valued Wiener process with covariance
\beq\label{langevin equation_diffusion}
\E\big[\langle W_s,\phi\rangle\langle W_t,\psi\rangle\big]=\int_0^{s\wedge t}\langle Q_u\phi,\psi\rangle du, \qquad s,t\geq0,~\phi,\psi\in\mathcal{S}(\R^d).
\eeq
\end{teor}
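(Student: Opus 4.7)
The plan is to recast the martingale formulation \eqref{CLT limit}--\eqref{CLT variation} of Theorem \ref{Theorem CLT} as the weak form of \eqref{langevin equation_infinite dim}, and then prove uniqueness of that Langevin equation by the standard evolution-family method. There are three substantive tasks: identification of the drift operator, identification of the noise as a Wiener process, and uniqueness.

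Identification of the drift is bookkeeping. Expanding $\int_{\Xc}Y_s(dx)A_s\phi(x)$ from \eqref{langevin equation_drift} and relabelling the dummy variables $x\leftrightarrow y$ in the integral $\int_{\Xc}\al(y,x)\phi(y)X_s(dy)$ reproduces, term by term, the four drift integrals of \eqref{CLT limit}. Hence \eqref{CLT limit} reads $\langle Y_t,\phi\rangle=\langle\gamma,\phi\rangle+\int_0^t\langle Y_s,A_s\phi\rangle\,ds+M_t(\phi)$, which is precisely the weak form of $dY_t=A_t^\ast Y_t\,dt+dM_t$.

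The key observation for the noise is that $X$ is deterministic by Theorem \ref{Theorem LLN}, so the quadratic variation \eqref{CLT variation} is a deterministic function of $t$; by L\'evy's time-change characterisation each continuous martingale $M_\cdot(\phi)$ is therefore Gaussian. Because $\phi\mapsto M_\cdot(\phi)$ is linear (inherited from \eqref{CLT limit}), polarisation of \eqref{CLT variation} yields $\langle M_\cdot(\phi),M_\cdot(\psi)\rangle_t=\int_0^t\langle Q_u\phi,\psi\rangle\,du$ with $Q_u$ as in \eqref{langevin equation_covariance measure}. Joint Gaussianity of $\{M_{t_i}(\phi_i)\}_{i=1}^k$ follows by stopping: for fixed $c_i,t_i,\phi_i$ the continuous martingale $N_t:=\sum_i c_iM_{t\wedge t_i}(\phi_i)$ has deterministic bracket $\sum_{i,j}c_ic_j\int_0^{t\wedge t_i\wedge t_j}\langle Q_u\phi_i,\phi_j\rangle\,du$ and is therefore Gaussian, in particular at any time $T\geq\max_i t_i$. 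Setting $W_t:=M_t$, the conditions of Definition \ref{Wiener process_Def} are then verified: symmetry, positivity and bilinearity of $\langle Q_u\cdot,\cdot\rangle$ are manifest from \eqref{langevin equation_covariance measure}; right-continuity in $u$ follows from the continuity of $t\mapsto X_t$ in $\Mcc_F(\Xc)$ given by Theorem \ref{Theorem LLN}; and since every integrand in \eqref{langevin equation_covariance measure} is weighted by $X_u$ supported on $\Xc$, $\langle Q_u\phi,\phi\rangle$ vanishes whenever $\phi\in\mathcal{S}(\Xc^c)$, so $W$ takes values in $\mathcal{S}'(\Xc)$.

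For uniqueness, the coefficients of $A_t$ are of class $C^\infty(\Xc)$ and are uniformly bounded on $[0,T]$ thanks to (A1) and \eqref{LLN Limit_initial}, so the backward problem $\partial_su_s=-A_su_s,\ u_t=\phi$, admits on $[0,t]$ a unique solution $u_s=U(t,s)\phi$ constructed by Picard iteration, giving a two-parameter evolution family on a suitable function space. If $Y$ and $\tilde Y$ are two weak solutions of \eqref{langevin equation_infinite dim} driven by the same $W$, the difference $Z_s:=Y_s-\tilde Y_s$ is a deterministic $\mathcal{S}'(\Xc)$-valued path with $Z_0=0$ and $\partial_s\langle Z_s,\phi\rangle=\langle Z_s,A_s\phi\rangle$. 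Differentiating $s\mapsto\langle Z_s,U(t,s)\phi\rangle$ gives a zero derivative, so $\langle Z_t,\phi\rangle=\langle Z_0,U(t,0)\phi\rangle=0$ for every $\phi\in\mathcal{S}(\R^d)$, i.e., $Z\equiv 0$. This proves uniqueness of \eqref{langevin equation_infinite dim} and, through the identification in the first step, uniqueness of the limiting martingale problem of Section \ref{section three}. The main technical obstacle is the construction of the family $U(t,s)$: because $A_t$ is non-local (it integrates against the measure $X_t$), classical parabolic semigroup theory does not apply off the shelf; one has to set up a Picard iteration on a space such as bounded measurable functions on $\Xc$ with sup-norm, where contractivity on short intervals is ensured by (A1) and \eqref{LLN Limit_initial}, and separately verify that enough smoothness is preserved so that $U(t,s)\phi\in\mathcal{S}(\R^d)$ remains an admissible test function against $Z_s$.
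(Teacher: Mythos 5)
Your proposal is correct and its skeleton matches the paper's, but both of its substantive steps are carried out by a different (more self-contained) route. For existence, the paper does not set $W:=M$ and check Definition \ref{Wiener process_Def} directly; it first establishes that $(Y_t)_{t\geq0}$ is an $\mathcal{S}'(\R^d)$-valued Gaussian process (citing Walsh for the fact that a continuous martingale with deterministic bracket is Gaussian — the same fact you rederive via L\'evy's characterisation and stopping), then computes the covariance functional $K(s,\phi;t,\psi)=\E[\langle Y_s,\phi\rangle\langle Y_t,\psi\rangle]$ and verifies the differential identity $\partial_t K(t,\phi;t,\psi)=K(t,A_t\phi;t,\psi)+K(t,\phi;t,A_t\psi)+\langle Q_t\phi,\psi\rangle$, which by the Bojdecki--Gorostiza criterion \cite[Theorem 2]{Boj_Gor} produces a generalized Wiener process $W$ associated to $Q$ for which \eqref{langevin equation_weak form} holds; the $\mathcal{S}'(\X)$-valuedness of $W$ is then deduced exactly as you do, from $\langle Q_t\phi,\phi\rangle=0$ for $\phi\in\mathcal{S}(\X^c)$. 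Your identification $W=M$ is more elementary and arguably cleaner, since the martingale problem hands you $M$ explicitly; the paper's covariance-criterion route has the advantage of depending only on the Gaussian law of $Y$. For uniqueness, the paper first checks (as you must also, and as you flag as the main obstacle) that $A_t$ maps $\mathcal{S}(\R^d)$ into itself — using compactness of $\X$ and smoothness of $b,d,\alpha,m$ — and then cites Kallianpur--P\'erez-Abreu \cite[Theorem 2.1]{Kallianpur_Perez-Abreu} for the existence of the reversed evolution system $T_{r,t}$ and the resulting unique mild solution $Y_t=T^*_{0,t}\gamma+\int_0^tT^*_{r,t}dW_r$; your Picard construction of $U(t,s)$ plus the duality argument $\frac{d}{ds}\langle Z_s,U(t,s)\phi\rangle=0$ is the standard proof hiding behind that citation, so you are reproving rather than contradicting the reference. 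Two small wording points: the difference $Z_s=Y_s-\tilde Y_s$ is a random path that satisfies a deterministic linear equation pathwise (not a deterministic path), and the verification that $U(t,s)$ preserves $\mathcal{S}(\R^d)$ requires controlling all Schwartz seminorms, not just the sup-norm — which your closing caveat correctly identifies but does not fully discharge (nor does the paper).
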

\begin{rem}
\begin{enumerate}
  \item An $\mathcal{S}'(\R^d)$-valued process $(Y_t)_{t\geq0}$ is said to be a solution of $\eqref{langevin equation_infinite dim}$ if for each $\phi\in\mathcal{S}(\R^d)$,
      \beq\label{langevin equation_weak form}
      \langle Y_t,\phi\rangle=\langle \gamma,\phi\rangle+\int_0^t\langle Y_u, A_u\phi\rangle du+\langle W_t,\phi\rangle, \qquad \text{for}~ t\in \R^+.
      \eeq
  \item $(W_t)_{t\geq0}$ has independent increments but not the stationary property since the covariance functional $Q$ depends on the time.
\end{enumerate}
\end{rem}
\begin{proof}
 {\bf Existence.} According to Theorem \ref{Theorem CLT}, the covariance functional of the continuous martingale $M_t$ on testing functions is deterministic, which implies that $(M_t)_{t\geq0}$ is a $\mathcal{S}'(\R^d)$-valued mean zero Gaussian process (see Walsh\cite[Proposition 2.10]{Walsh}). Hence, $(Y_t)_{t\geq0}$ is also an $\mathcal{S}'(\R^d)$-valued Gaussian process. \\
Set $K(s,\phi;t,\psi):=\E[\langle Y_s,\phi\rangle\langle Y_t,\psi\rangle]$.
To the end, one needs eventually to show that
\beq
\frac{\partial}{\partial t}K(t,\phi;t,\psi)-K(t,A_t\phi;t,\psi)-K(t,\phi;t,A_t\psi)=\langle Q_t\phi,\psi\rangle.
\eeq
By applying It$\hat{o}$ formula to $\langle Y_s,\phi\rangle\langle Y_t,\psi\rangle$ and \eqref{CLT limit}, we have that
\beq
\langle Y_t, \phi\rangle\langle Y_t, \psi\rangle=\int_0^t\langle Y_u, \phi\rangle d\,\langle Y_u, \psi\rangle + \int_0^t\langle Y_u, \psi\rangle d\,\langle Y_u, \phi\rangle  +\int_0^t  d\,\langle Y_u, \psi\rangle d\,\langle Y_u, \phi\rangle.
\eeq
Then taking expectation on both sides, we obtain that
\beq
K(t,\phi; t,\psi)= \int_0^t\langle Y_u, \phi\rangle \langle Y_u, A_u \psi\rangle du + \int_0^t\langle Y_u, \psi\rangle \langle Y_u, A_u \phi\rangle du+\int_0^t d[M_.(\psi), M_.(\phi)]_u.
\eeq
Differentiate the last equation with respect to $t$, we conclude
\beq
\bea
\frac{\partial}{\partial t}K(t,\phi;t,\psi)
&=K(t,A_t\phi;t,\psi)+K(t,\phi;t,A_t\psi)+\frac{\partial}{\partial t}\E [M_t(\psi)M_t(\phi)]\\
&=K(t,A_t\phi;t,\psi)+K(t,\phi;t,A_t\psi)+\langle Q_t\phi,\psi\rangle,
\eea
\eeq
where the last equality is due to \eqref{CLT variation} and \eqref{langevin equation_covariance measure}.\\
On the other hand, it is not hard to check that $(Q_t)_{t\geq0}$ satisfies the conditions required in Definition \ref{Wiener process_Def}.
Finally, by the results of \cite[Theorem 2]{Boj_Gor}, there exists an $\mathcal{S}'(\R^d)$-valued Wiener process $(W_t)_{t\geq0}$ associated to the covariance functional $(Q_t)_{t\geq0}$ such that
$(Y_t)_{t\geq0}$ satisfies the generalized Langevin equation \eqref{langevin equation_infinite dim} driven by a generalized Wiener process $(W_t)_{t\geq0}$. It remains to show that moreover $(W_t)_{t\geq0}$ is in fact $\mathcal{S}'(\X)$-valued problem. Note that from \eqref{langevin equation_covariance measure} we have that if $\phi\in\mathcal{S}(\X^c)$ or $\psi\in\mathcal{S}(\X^c)$ then $\langlerangle{Q_t\phi,\psi}=0$, for any $t\geq 0$. This is due to the definition of $b(x)$, $d(x)$, $\alpha(x,y)$ and $D(x,dz)$. Thus in \eqref{eq:covariance} $C(s,\phi;s,\phi)=\Ebb{\langlerangle{W_s,\phi}^2}=0$ and therefore since $(W_t)_{t\geq 0}$ is centered we conclude that $\langlerangle{W_s,\phi}\equiv 0$.

{\bf Uniqueness.} First note that Assumption (A1) implies easily that $A_t\phi \in \mathcal{S}(\R^d)$ for any $\phi\in \mathcal{S}(R^d)$ since $\X$ and henceforth $\X-\X$ are compact and any differentiation of \eqref{langevin equation_drift} can be taken under the integrals of the right-hand side of \eqref{langevin equation_drift}. Indeed all terms but $\int_{\Xc}\phi(x+z)m(x,z)dz$ are obvious. However since $m(x,z)\in C^\infty(\X\times(\X-\X))$ then $\sup_{x\in\X,z\in \X-\X}\labsrabs{\frac{d^n}{dx^n}m(x,z)}<\infty$ for any $n\geq 0$ we conclude that $\int_{\Xc}\phi(x+z)m(x,z)dz\in\mathcal{S}(\R^d)$.

Since all coefficients are bounded, the linear operator $A_t:\mathcal{S}(\R^d)\mapsto\mathcal{S}(\R^d)$  is a supremum norm uniformly bounded for $t\in[0,T]$ for any $T>0$. Therefore, the equation \eqref{langevin equation_infinite dim} has an unique $\mathcal{S}'(\R^d)$-valued solution given by the mild form:
  \beq
  Y_t=T^*_{0,t}\gamma+\int_0^t T^*_{r,t}d W_r,
  \eeq
  where $\{T_{r,t}: 0\leq r\leq t<+\infty\}$ is the unique reversed evolution system generated by $(A_t)_{t\geq0}$ and $T^*_{r,t}$ is its adjoint operator of $T_{r,t}$.
  We refer the reader to \cite[Theorem 2.1]{Kallianpur_Perez-Abreu} for details.
\end{proof}

\section{One dimensional time-inhomogeneous Ornstein-Uhlenbeck process}\label{section five}
In this section, we will study a degenerate case as an example of Theorem \ref{Langevin equation}. Consider the case when there is no spatial dispersal and all the individuals stay at the same position, i.e. $D(x,dz)=1_{\{z=0\}}$ in dispersal kernel \eqref{dispersal kernel}.

\begin{prop}
  Admit the same conditions as in Theorem \ref{Theorem CLT}. In particular, assume $X_t^n=\xi_t^n\de_x$  and $D(x,dz)=1_{\{z=0\}}$ in \eqref{dispersal kernel}. Then, $(\xi_t^n, \eta_t^n)_{t\geq0}$ converge in law to $(\xi_t, \eta_t)_{t\geq0}$ as $n\to\infty$ which satisfies the following equations:
  \beq\label{langevin equation_one dim}
  \left\{
    \begin{array}{ll}
      d\xi_t=\big(b(x)-d(x)-\al(x,x)\xi_t\big)\xi_tdt \\
      d\eta_t=\big(b(x)-d(x)-2\al(x,x)\xi_t\big)\eta_tdt+\sqrt{\big(b(x)+d(x)+\al(x,x)\xi_t\big)\xi_t}dB_t,
    \end{array}
  \right.
  \eeq
where $\eta_t^n:=\sqrt{n}(\xi_t^n-\xi_t)$.
\end{prop}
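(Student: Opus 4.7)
The plan is to specialize Theorem \ref{Theorem LLN} and Theorem \ref{Theorem CLT} (equivalently Theorem \ref{Langevin equation}) to the degenerate geometry at hand. Since $D(x,dz)=\mathbf{1}_{\{z=0\}}$, every birth event places the newborn at its parent's location, so if the initial condition is a single atom then $X_t^n=\xi_t^n\delta_x$ for all $t\ge 0$, and by uniqueness of the limit equation \eqref{LLN limit} the deterministic limit has the form $X_t=\xi_t\delta_x$.

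First, I would apply Theorem \ref{Theorem LLN}. Substituting $X_s=\xi_s\delta_x$ and $D(x,dz)=\delta_0(dz)$ into \eqref{LLN limit} and testing against any $\phi\in\Bc(\Xc)$ with $\phi(x)=1$ collapses every spatial integral to an evaluation at $x$, turning the measure-valued integro-differential equation into the scalar logistic ODE
\[
\frac{d\xi_t}{dt}=\bigl(b(x)-d(x)-\alpha(x,x)\xi_t\bigr)\xi_t,\qquad \xi_0\ \text{given},
\]
which is the first line of \eqref{langevin equation_one dim}.

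Second, I would invoke Theorem \ref{Theorem CLT} with $\gamma=\eta_0\delta_x$. Because $Y_t^n=\sqrt{n}(X_t^n-X_t)=\eta_t^n\delta_x$ is supported at $\{x\}$, the $\mathcal{S}'(\Xc)$-valued limit $Y_t$ is also supported at $\{x\}$ and can be written $Y_t=\eta_t\delta_x$ for a real-valued process $\eta_t$. Pairing \eqref{CLT limit} with any test function $\phi\in\mathcal{S}(\R^d)$ satisfying $\phi(x)=1$ and using $X_s=\xi_s\delta_x$, $Y_s=\eta_s\delta_x$, each drift term collapses to an evaluation at $x$; the two $\alpha$-terms combine to $2\alpha(x,x)\xi_s\eta_s$, yielding
\[
\eta_t=\eta_0+\int_0^t\bigl(b(x)-d(x)-2\alpha(x,x)\xi_s\bigr)\eta_s\,ds+M_t,
\]
where $M_t$ is a continuous scalar martingale. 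Its quadratic variation is read off from \eqref{CLT variation} for the same test function:
\[
\langle M\rangle_t=\int_0^t\xi_s\bigl(b(x)+d(x)+\alpha(x,x)\xi_s\bigr)\,ds.
\]
By the standard martingale representation theorem, $M_t=\int_0^t\sqrt{\xi_s(b(x)+d(x)+\alpha(x,x)\xi_s)}\,dB_s$ for some Brownian motion $B_t$ (possibly on an enlarged space), giving the second line of \eqref{langevin equation_one dim}.

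Third, for the joint convergence of $(\xi_t^n,\eta_t^n)_{t\ge 0}$, I would note that Theorem \ref{Theorem LLN} gives convergence of $\xi^n$ to the deterministic $\xi$, while Theorem \ref{Theorem CLT} gives convergence of $\eta^n$ to $\eta$; marginal convergence then upgrades to joint convergence in $\D([0,\infty),\R^2)$ because one of the two limits is deterministic. Well-posedness of the limit system is immediate: the first equation is a logistic ODE (Lipschitz on bounded sets, and the right-hand side pushes $\xi$ into a bounded interval), and given $\xi$ the second equation is a linear SDE in $\eta_t$ with bounded continuous coefficients, whose pathwise uniqueness follows from a routine Gronwall estimate. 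The main (minor) technical point is verifying that the test-function reduction is valid on $\mathcal{S}(\R^d)$ — i.e.\ that one may legitimately choose $\phi$ with $\phi(x)=1$ without affecting the limiting distributional equation — which follows from the fact that both sides depend on $\phi$ only through its value at $x$ once the atomic structure of $X_s$ and $Y_s$ is exploited.
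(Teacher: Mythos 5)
Your proposal is correct and follows essentially the same route as the paper: both reduce the infinite-dimensional limit to scalars by testing against a function equal to $1$ at $x$, read off the logistic ODE from \eqref{LLN limit}, and obtain the Brownian motion by normalizing a continuous martingale with the quadratic variation \eqref{CLT variation}. The only cosmetic difference is that you invoke the martingale problem of Theorem \ref{Theorem CLT} plus the martingale representation theorem, whereas the paper passes through Theorem \ref{Langevin equation} and constructs $B_t$ explicitly as $\int_0^t\bigl[(b(x)+d(x)+\al(x,x)\xi_s)\xi_s\bigr]^{-1/2}d\langle W_s,1\rangle$ --- the same argument in different clothing.
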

\begin{rem}

We can regard the system above as an inhomogeneous Ornstein-Uhlenbeck (OU) process living in a deterministic environment. We refer the reader to \cite[Theorem 11.2.3]{Ethier_Kurtz} for a general limiting prcocess defined by a one-dim inhomogeneous Langevin equation.
\end{rem}
\begin{proof}  Since $D(x,dz)=1_{\{z=0\}}$, by taking $\phi=1$ in \eqref{LLN limit}, we can easily show that there exists a process $(\xi_t)_{t\geq0}$ defined by $\xi_t:=\langle X_t,1\rangle$ solving the first equation in \eqref{langevin equation_one dim}.
Taking $\phi=1$, from \eqref{langevin equation_drift}, we have that
\beq\label{OU_drift}
\langle Y_t, A_t1\rangle =\big(b(x)-d(x)-2\al(x,x)\xi_t\big) \langle Y_t,1\rangle.
\eeq
From \eqref{langevin equation_covariance measure} and \eqref{langevin equation_diffusion}, we have that
\beq
\E\langle W_t,1\rangle^2=\int_0^t\big(b(x)+d(x)+\al(x,x)\xi_s\big)\xi_sds.
\eeq
Define
\beq
B_t=\int_0^t\Big[\big(b(x)+d(x)+\al(x,x)\xi_s\big)\xi_s\Big]^{-\frac{1}{2}}d\langle W_s,1\rangle.
\eeq
Then, we get its quadratic variation $\langle B\rangle_t=t$.  Thus, $(B_t)_{t\geq0}$ is a standard Brownian motion. Furthermore, we have
\beq\label{OU_diffusion}
d\langle W_t,1\rangle=\sqrt{\big(b(x)+d(x)+\al(x,x)\xi_t\big)\xi_t}\cdot dB_t.
\eeq
Let $\eta_t:=\langle Y_t,1\rangle$, by taking \eqref{OU_drift} and \eqref{OU_diffusion} back to \eqref{langevin equation_weak form} when $\phi=1$, the second equation in \eqref{langevin equation_one dim} follows.
\end{proof}

In the next result, we study the stationary distribution of the system \eqref{langevin equation_one dim}.
\begin{prop}\label{Langevin equation_one dim}
  Suppose the process $(\eta_t)_{t\geq0}$ is defined as in \eqref{langevin equation_one dim}. Then, it has a stationary distribution which is Gaussian $\mathcal{N}(0,\frac{b(x)}{\al(x,x)})$.
\end{prop}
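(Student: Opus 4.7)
The plan rests on two observations. First, under assumption (A2) the logistic ODE $\dot\xi_t=(b(x)-d(x)-\alpha(x,x)\xi_t)\xi_t$ has $\xi_\infty:=(b(x)-d(x))/\alpha(x,x)$ as its unique positive, globally attracting fixed point; a direct computation (the equation is solvable in closed form) yields $|\xi_t-\xi_\infty|\le Ce^{-\kappa t}$ for some $\kappa>0$ as soon as $\xi_0>0$. Second, the SDE for $\eta_t$ is linear in $\eta$ with deterministic coefficients (since $\xi$ is deterministic), so by variation of constants its solution is explicit and Gaussian.

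Concretely, introduce the integrating factor $\Phi(s,t):=\exp\bigl(\int_s^t(b(x)-d(x)-2\alpha(x,x)\xi_u)\,du\bigr)$. Then
\beq
\eta_t=\Phi(0,t)\eta_0+\int_0^t\Phi(s,t)\sqrt{(b(x)+d(x)+\alpha(x,x)\xi_s)\xi_s}\,dB_s,
\eeq
and, assuming $\eta_0$ Gaussian (or deterministic), $\eta_t$ is Gaussian with mean $m(t)=\Phi(0,t)\E[\eta_0]$ and variance
\beq
v(t)=\Phi(0,t)^2\mathrm{Var}(\eta_0)+\int_0^t\Phi(s,t)^2\bigl(b(x)+d(x)+\alpha(x,x)\xi_s\bigr)\xi_s\,ds.
\eeq
Freezing $\xi_s\equiv\xi_\infty$ turns the SDE into the autonomous OU equation $d\eta=-(b(x)-d(x))\eta\,dt+\sqrt{2b(x)\xi_\infty}\,dB$, whose stationary law is $\mathcal N(0,b(x)/\alpha(x,x))$ by the one-line Lyapunov identity $v_\infty=\sigma^2/(-2\mu)=2b(x)\xi_\infty/(2(b(x)-d(x)))$. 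The task thus reduces to proving $m(t)\to0$ and $v(t)\to b(x)/\alpha(x,x)$ as $t\to\infty$.

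The key technical step is a uniform exponential bound on $\Phi$. Fix $\varepsilon\in(0,b(x)-d(x))$; by the first step there exists $T_0$ such that $b(x)-d(x)-2\alpha(x,x)\xi_s\le-(b(x)-d(x)-\varepsilon)$ for $s\ge T_0$. Combined with boundedness of the exponent on $[0,T_0]$, this gives $\Phi(s,t)^2\le Ce^{-(b(x)-d(x)-\varepsilon)(t-s)}$ for all $0\le s\le t$. The mean claim is then immediate. For the variance, substitute $u=t-s$; for each fixed $u$ the integrand converges as $t\to\infty$ to $e^{-2(b(x)-d(x))u}\cdot 2b(x)\xi_\infty$, and the uniform bound above provides an integrable majorant. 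Dominated convergence then gives
\beq
v(t)\longrightarrow\int_0^\infty 2b(x)\xi_\infty\,e^{-2(b(x)-d(x))u}\,du=\frac{b(x)}{\alpha(x,x)},
\eeq
so $\eta_t\Rightarrow\mathcal N(0,b(x)/\alpha(x,x))$, which is the (unique) stationary distribution of the autonomous OU process obtained in the $t\to\infty$ limit.

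The only genuinely technical point is the uniform exponential estimate on $\Phi(s,t)^2$, which in turn hinges on the (easy) exponential convergence of the logistic ODE; once that is in hand, everything reduces to the explicit closed-form variance and a standard dominated-convergence argument, and no further obstacle is expected.
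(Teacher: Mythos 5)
Your proposal is correct and follows essentially the same route as the paper: both exploit that the SDE for $\eta_t$ is linear with deterministic time-dependent coefficients, so $\eta_t$ is explicitly Gaussian, and both reduce the claim to showing the variance converges to $b(x)/\alpha(x,x)$ using $\xi_t\to(b(x)-d(x))/\alpha(x,x)$. The only (cosmetic) difference is that the paper evaluates the limiting variance by an application of L'H\^opital's rule to the ratio $\int_0^t\sigma_u^2e^{2\int_0^u\theta_v dv}du\big/e^{2\int_0^t\theta_v dv}$, whereas you justify the same limit via a uniform exponential bound on $\Phi(s,t)^2$ and dominated convergence.
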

\begin{rem}
When we consider the long term behavior, as long as $b(x)>d(x)$, it always has the same fluctuation no matter which value the death rate $d(x)$ takes.
\end{rem}
\begin{proof}
Let \[\theta_t:=-\big(b(x)-d(x)-2\al(x,x)\xi_t\big),\]
\[\sigma_t:=\sqrt{\big(b(x)+d(x)+\al(x,x)\xi_t\big)\xi_t}.\]
From \eqref{langevin equation_one dim}, it follows that
\beq
d\eta_t=-\theta_t\eta_tdt+\sigma_tdB_t.
\eeq
The characteristic function of $(\eta_t)_{t\geq0}$ has the form
\beq
\E_{\eta_0}\big[e^{iz\eta_t}\big]=\exp{\Big\{ize^{-\int_0^t\theta_udu}\eta_0
-\frac{1}{2}z^2\int_0^t\sigma_u^2e^{-2\int_u^t\theta_vdv}du\Big\}}.
\eeq
Since $\xi_t$ in \eqref{langevin equation_one dim} has a unique stable equilibrium $\frac{b(x)-d(x)}{\al(x,x)}$, it follows that $\lim\limits_{t\to\infty}\theta_t=b(x)-d(x)>0$
and $\lim\limits_{t\to\infty}\sigma_t^2=2b(x)\big(b(x)-d(x)\big)/\al(x,x)$.

Then,
\beq
\bea
\lim\limits_{t\to\infty}\log\E_{\eta_0}\big[e^{iz\eta_t}\big]
&=-\lim\limits_{t\to\infty}\frac{1}{2}z^2\cdot\frac{\int_0^t\sigma_u^2e^{2\int_0^u\theta_vdv}du}{e^{2\int_0^t\theta_udu}}\\
&=-\frac{1}{2}z^2\cdot\lim\limits_{t\to\infty}\frac{\sigma_t^2}{2\theta_t}\\
&=-\frac{1}{2}z^2\frac{b(x)}{\al(x,x)}.
\eea
\eeq
Finally, we conclude that $(\eta_t)_{t\geq0}$ has stationary distribution $\mathcal{N}(0,\frac{b(x)}{\al(x,x)})$.
\end{proof}

%
%
%
%

\end{document}